\title{Combined degree and connectivity conditions for $H$-linked graphs}
\author{\textsc{Florian Pfender}\\
\normalsize Universit{\"a}t Rostock\\
\normalsize Institut f{\"u}r Mathematik\\
\normalsize D-18055 Rostock, Germany\\
\normalsize \texttt{Florian.Pfender@uni-rostock.de}
\date{}}
\newcommand{\N}{{\mathbbm N}}
\newcommand{\cS}{{\mathcal S}}
\newcommand{\cA}{{\mathcal A}}
\newcommand{\cB}{{\mathcal B}}
\newtheorem{theorem}{Theorem}[section]
\newtheorem{lemma}[theorem]{Lemma}
\newtheorem{fact}[theorem]{Fact}
\newtheorem{definition}[theorem]{Definition}
\newtheorem{corollary}[theorem]{Corollary}
\newtheorem{claim}{Claim}
\newtheorem{case}{Case}[theorem]
\begin{document}
\maketitle

\begin{abstract}
For a given multigraph $H$, a graph $G$ is $H$-linked, if $|G|\ge |H|$ and for every
injective map $\tau: V(H)\to V(G)$,
we can find internally disjoint paths in $G$, such that every
edge from $uv$ in $H$ corresponds to a $\tau(u)-\tau(v)$ path.

To guarantee that a $G$ is $H$-linked, you need a minimum degree larger than $\frac{|G|}{2}$. This
situation changes, if you know that $G$ has a certain connectivity $k$. Depending on $k$,
even a minimum degree independent of $|G|$ may suffice. Let $\delta(k,H,N)$ be the minimum number,
such that every $k$-connected graph $G$ with $|G|=N$ and $\delta(G)\ge \delta(k,H,N)$
is $H$-linked. We study bounds for this quantity. In particular, we find bounds for all multigraphs $H$ with
at most three edges, which are optimal up to small additive or multiplicative constants.
\end{abstract}

\section{Introduction and notation}
All graphs and multigraphs considered here are loopless. For concepts and notation not defined here we refer the reader 
to Diestel's book~(\cite{Di}). 

A {\em separation} of a graph $G$ consists of two sets $A,B\subseteq V(G)$ with
$A\cup B=V$ and no edges between $A\setminus B$ and $B\setminus A$. If $|A\cap B|=k$ then the separation is called a 
{\em $k$-separation}.

Now let $H$ be a multigraph. A graph $G$ is {\em $H$-linked}, if $|G|\ge |H|$ and for every
injective map $\tau: V(H)\to V(G)$,
we can find internally disjoint paths in $G$, such that every
edge from $uv$ in $H$ corresponds to a $\tau(u)-\tau(v)$ path.
This concept generalizes several concepts of connectivity studied before.
If $H$ is a star with $k$ edges (or a $k$-multi-edge), then $H$-linked graphs are exactly the
$k$-connected graphs.
If $H$ is a cycle  with $k$ edges, then $H$-linked graphs are exactly the
$k$-ordered graphs. Finally,
if $H$ is a matching with $k$ edges, then $H$-linked graphs are exactly the
$k$-linked graphs.

The following are easy facts about $H$-linked graphs. Detailed proofs for Facts~\ref{f1} and~\ref{f2}
can be found in~\cite{LWY}.
\begin{fact}\label{f1}
Let $H_1$ and $H_2$ be multigraphs and suppose that $H_2$ is a
submultigraph of $H_1$. Then every $H_1$-linked graph is
$H_2$-linked.
\end{fact}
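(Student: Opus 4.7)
The plan is to reduce any $H_2$-linking task in $G$ to an $H_1$-linking task, which we can solve by hypothesis, and then throw away the superfluous paths. So I would start with an arbitrary injective map $\tau_2 \colon V(H_2) \to V(G)$ and try to extend it to an injective map $\tau_1 \colon V(H_1) \to V(G)$ with $\tau_1|_{V(H_2)} = \tau_2$.

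For the extension step, I first need to check that there are enough vertices in $G$. Since $G$ is $H_1$-linked, the definition forces $|G| \ge |H_1|$, so $|G| \setminus \tau_2(V(H_2))$ contains at least $|H_1| - |H_2| \ge |V(H_1)\setminus V(H_2)|$ vertices (here I am using that $H_2$ is a submultigraph of $H_1$, so $V(H_2) \subseteq V(H_1)$). Any injection from $V(H_1)\setminus V(H_2)$ into this leftover vertex set yields the required extension $\tau_1$.

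Next I would invoke the $H_1$-linkedness of $G$ with respect to $\tau_1$ to obtain a family of internally disjoint paths in $G$, one for each edge of $H_1$, such that each edge $uv \in E(H_1)$ corresponds to a $\tau_1(u)$--$\tau_1(v)$ path. Restricting this family to the subcollection indexed by $E(H_2) \subseteq E(H_1)$ gives internally disjoint paths connecting $\tau_1(u) = \tau_2(u)$ and $\tau_1(v) = \tau_2(v)$ for every $uv \in E(H_2)$, which is exactly what is needed to witness $H_2$-linkedness for $\tau_2$.

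I do not anticipate any real obstacle here: the argument is a direct unwinding of the definitions. The only step that requires a moment's attention is confirming that the extension from $\tau_2$ to $\tau_1$ exists, and that follows immediately from the cardinality bound $|G|\ge|H_1|$ built into the definition of $H_1$-linked. Multiplicities in $H_1$ cause no issue either, because the parallel edges of $H_1$ are simply handled by the $H_1$-linkage and then discarded if they do not appear in $H_2$.
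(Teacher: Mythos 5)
Your proof is correct and is the natural definition-unwinding argument: extend the injection using $|G|\ge|H_1|$, invoke $H_1$-linkedness, and discard the paths corresponding to edges of $E(H_1)\setminus E(H_2)$. The paper itself gives no proof of this fact (it defers to the reference [LWY]), but your argument is exactly the standard one, and the one subtlety you might have flagged explicitly --- that a subfamily of internally disjoint paths remains internally disjoint, and that avoiding $\tau_1(V(H_1))$ internally is at least as strong as avoiding $\tau_2(V(H_2))$ --- is immediate.
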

\begin{fact}\label{f2}
Let $H_1$ and $H_2$ be multigraphs and suppose that one gets $H_2$
from $H_1$ through the identification of two non-adjacent 
vertices, one of
which has degree $1$. Then every $H_1$-linked graph is
$H_2$-linked.
\end{fact}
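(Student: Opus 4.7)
The plan is to lift an $H_2$-linkage problem to the given $H_1$-linkage property by splitting the identified vertex. Let $u, v \in V(H_1)$ be the two non-adjacent vertices being identified, with $v$ of degree $1$ in $H_1$ and unique neighbor $w$, and let $u' \in V(H_2)$ denote the identified vertex. Given an injection $\tau_2 \colon V(H_2) \to V(G)$, set $x := \tau_2(u')$ and choose $y \in N_G(x) \setminus \tau_2(V(H_2))$. I extend $\tau_2$ to an injection $\tau_1 \colon V(H_1) \to V(G)$ by setting $\tau_1(u) = x$, $\tau_1(v) = y$, and $\tau_1(z) = \tau_2(z)$ for every other vertex $z \in V(H_1)$, and then apply $H_1$-linkedness to obtain internally disjoint paths $\{P_e : e \in E(H_1)\}$ realizing the edges of $H_1$.

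To produce the required $H_2$-linkage, I reuse $P_e$ for each edge of $H_2$ coming from an edge of $H_1$ not incident to $v$. For the edge of $H_2$ coming from $vw$, I form a new path by prepending the edge $xy$ to $P_{vw}$, yielding a path from $x$ to $\tau_2(w)$. If $uw$ is already an edge of $H_1$, then $u'w$ appears as a multi-edge in $H_2$, and a second copy is realized by $P_{uw}$ unchanged; internal disjointness between $P_{uw}$ and the modified path follows from the internal disjointness of the original family together with the fact that $y$ does not appear on $P_{uw}$.

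Internal disjointness of the whole new family is verified the same way: the only modification introduces $y$ as a new internal vertex of a single path. Since $v$ has degree $1$ in $H_1$, the image $y = \tau_1(v)$ occurs in the $H_1$-linkage only as an endpoint of $P_{vw}$, so it lies on no other $P_e$; hence inserting $y$ internally into the rerouted path creates no conflict with any other path, and the endpoints of the new family match the ones required by $\tau_2$.

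The main obstacle is the initial choice of $y$: one must ensure that $x$ has a neighbor outside the $(|V(H_1)|-1)$-element set $\tau_2(V(H_2))$. This is where the connectivity implications of $H_1$-linkedness enter --- such a $y$ exists because $H_1$-linkedness forces $G$ to have sufficient local connectivity at $x$ (for instance, a bound of the form $\delta(G) \ge |V(H_1)| - 1$ suffices, and an estimate of this strength can be extracted by considering $H_1$-linkages in which $x$ plays the role of different vertices of $H_1$ and counting internally disjoint paths meeting $x$).
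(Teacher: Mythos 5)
Your proof is correct and follows the standard splitting argument (the paper itself defers the proof to Liu--West--Yu, whose argument is essentially this one): lift $\tau_2$ to $\tau_1$ by assigning the degree-one vertex to a fresh neighbor $y$ of $x$, then prepend the edge $xy$ to the path realizing $vw$. The only point you leave slightly vague --- why such a $y$ exists --- is immediate from Fact~\ref{f3} with $k=1$, which gives $\kappa(G)\ge |H_1|-1$ and hence $\delta(G)\ge |H_1|-1 > |\tau_2(V(H_2))\setminus\{x\}|$.
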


\begin{corollary}\label{c1}
Let $H$ be a multigraph without isolated vertices. Then every
$|E(H)|$-linked graph is $H$-linked.
\end{corollary}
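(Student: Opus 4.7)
The plan is to reduce everything to Fact~\ref{f2}. Writing $k = |E(H)|$ and letting $M_k$ denote the matching with $k$ edges, the hypothesis that $G$ is $|E(H)|$-linked says exactly that $G$ is $M_k$-linked. It therefore suffices to show that $H$ can be constructed from $M_k$ by a sequence of identifications of the type allowed by Fact~\ref{f2}, namely merging a degree-$1$ vertex with a non-adjacent vertex; then repeated application of Fact~\ref{f2} along the sequence yields that $G$ is $H$-linked.

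I would produce such a sequence by induction on $2k - |V(H)|$. The base case $|V(H)| = 2k$ is forced: combined with the absence of isolated vertices, $\sum_v \deg_H(v) = 2k$ makes every vertex of $H$ have degree exactly $1$, so $H = M_k$ and nothing is to be done. In the inductive step, when $|V(H)| < 2k$ some vertex $w$ of $H$ satisfies $\deg_H(w) \geq 2$. Pick an edge $e$ at $w$ with other endpoint $w_1$ (where $w_1 \neq w$ since $H$ is loopless) and form $H'$ by splitting $w$ into vertices $w'$ and $w''$: the new vertex $w'$ inherits only the edge $e$ (now viewed as $w'w_1$), while $w''$ inherits all the remaining edges formerly incident to $w$. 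Then $H'$ has $|V(H)|+1$ vertices, still $k$ edges, and no isolated vertex, because $\deg_{H'}(w'') = \deg_H(w) - 1 \geq 1$; furthermore $w'$ has degree $1$ in $H'$ with unique neighbor $w_1 \neq w''$, so $w'$ and $w''$ are non-adjacent. Since identifying $w'$ with $w''$ converts $H'$ back into $H$, the inductive hypothesis applied to $H'$ produces the required sequence.

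The only delicate point in the argument is the non-adjacency of $w'$ and $w''$ after the split, and this is exactly where the looplessness of $H$ is used, via $w_1 \neq w$ and hence $w_1 \neq w''$. I do not anticipate any further obstacle.
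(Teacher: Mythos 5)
Your proof is correct and is exactly the argument the paper intends: the corollary is stated without proof as a consequence of Facts~\ref{f1} and~\ref{f2}, and your induction---repeatedly splitting a vertex of degree at least $2$ so that $H$ is recovered from the matching with $|E(H)|$ edges by a chain of identifications of the kind allowed in Fact~\ref{f2}---supplies the missing details. The points you single out (no isolated vertices keeps every intermediate multigraph admissible, and looplessness gives the required non-adjacency of the two vertices being identified) are indeed the only things to check.
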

\begin{fact}\label{f3}
Let $H$ be a multigraph with a $k$-multi-edge. Then, every $H$-linked graph is
$(|H|-2+k)$-connected.
\end{fact}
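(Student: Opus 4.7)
The plan is to prove the contrapositive: if $G$ admits a non-trivial separation $(A,B)$ with $s := |A \cap B| \leq |H| - 3 + k$, then $G$ is not $H$-linked. Let $u, v \in V(H)$ be the endpoints of the $k$-multi-edge.

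First I would pick $a \in A \setminus B$ and $b \in B \setminus A$, set $\tau(u) = a$ and $\tau(v) = b$, and then distribute the remaining $|H| - 2$ vertices of $V(H)$ into $G$ greedily, placing as many as possible inside $A \cap B$ and sending any overflow into $(A \setminus B) \setminus \{a\}$ or $(B \setminus A) \setminus \{b\}$. The hypothesis $|G| \geq |H|$ guarantees enough room, and the point of the placement is to consume as much of $A \cap B$ as possible so that any $a$-$b$ path has trouble finding free internal vertices in the separator.

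Next, since $G$ is $H$-linked, we obtain $k$ internally disjoint $a$-$b$ paths whose internal vertices avoid $\tau(V(H) \setminus \{u, v\})$. As $A \cap B$ separates $a$ from $b$, each such path must contribute at least one distinct internal vertex of $A \cap B$ that is not among the other $\tau$-images already placed there. Counting the vertices of $A \cap B$ then gives
\[
s \;\geq\; k + \min(s,\, |H| - 2).
\]
If $s \geq |H| - 2$ this yields $s \geq |H| - 2 + k$, contradicting $s \leq |H| - 3 + k$; if instead $s < |H| - 2$ it forces $k \leq 0$, equally impossible. Either way we obtain the desired contradiction.

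There is no real obstacle to speak of; the only bookkeeping worth verifying is that $\tau$ really can be made injective (which follows from $|G| \geq |H|$) and that $|G| \geq |H| - 1 + k$ so that $(|H|-2+k)$-connectivity is even meaningful. The latter follows from the $H$-linked hypothesis itself, since fitting $k$ internally disjoint $\tau(u)$-$\tau(v)$ paths already demands at least $k - 1$ internal vertices outside of $\tau(V(H))$.
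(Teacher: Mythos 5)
Your proof is correct; the paper states this fact without proof (listing it among the ``easy facts''), and your contrapositive argument --- saturating a small separator $A\cap B$ with as many terminals as possible and then counting the $k$ distinct internal crossing vertices forced by the parallel $\tau(u)$--$\tau(v)$ paths --- is exactly the standard argument the claim is meant to follow from, including the remark that $|G|\ge|H|+k-1$ so that the connectivity assertion is meaningful. The only point worth making explicit is that you rely on the convention that internal vertices of the linkage paths avoid all of $\tau(V(H))$; this is the intended subdivision reading of ``internally disjoint'' in the paper (it is needed already for the stated equivalence with $k$-linked graphs), and it is what makes the $k$ crossing vertices disjoint from the terminals you packed into $A\cap B$.
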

The minimum degree required for a graph to be $k$-linked is well understood. Kawabarayashi, Kostochka and Yu
prove the following sharp bounds.
\begin{lemma}[\cite{KKY}]\label{klinked4}
Let $G$ be a graph on $N\ge 2k$ vertices with minimum degree
$$\delta(G)\ge 
\begin{cases}
\frac{N+2k-3}{2}, & \mbox{ if } ~N\ge 4k-1\\
\frac{N+5k-5}{3}, & \mbox{ if } ~3k\le N\le 4k-2\\
N-1, & \mbox{ if } ~2k\le N\le 3k-1
\end{cases} .
$$
Then $G$ is $k$-linked.
\end{lemma}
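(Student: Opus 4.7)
The plan is to treat the three ranges of $N$ separately. The third range $2k\le N\le 3k-1$ is immediate: $\delta(G)\ge N-1$ forces $G=K_N$, and the complete graph on at least $2k$ vertices is trivially $k$-linked since the edges $s_it_i$ themselves realise the required system of paths.

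For the principal range $N\ge 4k-1$, I would follow a Bollob\'as--Thomason style reduction. First observe that any separation $(A,B)$ of order $c$ admits a vertex $v\in A\setminus B$ with $\deg(v)\le |A\setminus B|+c-1\le (N+c-2)/2$; the hypothesis $\delta(G)\ge (N+2k-3)/2$ therefore forces $G$ to be essentially $2k$-connected. Given an arbitrary linkage demand $(s_1,t_1),\dots,(s_k,t_k)$, the next step is to locate a ``hub'' $W\subseteq V(G)$ of order $O(k)$ whose induced subgraph is dense enough that any linkage demand on $\le 2k$ of its vertices is realisable inside $W$. Menger's theorem, applied through the high connectivity, then supplies $2k$ internally disjoint paths attaching the terminals to distinct vertices of $W$, and concatenation completes the linkage. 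The stated threshold is exactly what is required both to produce $W$ and to keep the attachment paths internally disjoint.

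The middle range $3k\le N\le 4k-2$ interpolates: because $N$ is close to $3k$, the hub cannot be small, so $W$ must occupy a substantial fraction of $V(G)$, and the delicate threshold $(N+5k-5)/3$ emerges from balancing the size of $W$ against the length of the attachment paths. Sharpness in each range I would verify via two nearly-balanced cliques glued along a small separator; such graphs fail to be $k$-linked because any linkage would force more internally disjoint paths through the separator than the separator has vertices.

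The main obstacle is the simultaneous control of the order, internal density, and external interaction of the hub $W$: the attachment paths must be internally disjoint from each other and from the interior routing, and $W$ must accommodate every conceivable induced demand from the Menger endpoints. A minimum counterexample argument, combined with induction on $N$ (using Corollary~\ref{c1} to reduce to pure $m$-linkedness whenever a low-degree vertex can be deleted or contracted), seems the most natural framework. The middle range is expected to be the hardest, because $G$ is simultaneously small and far from complete, so the degree condition must be exploited quantitatively rather than merely to secure global connectivity.
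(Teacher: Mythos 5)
This lemma is not proved in the paper at all; it is imported verbatim from \cite{KKY}, so there is no in-paper argument to compare against, and your proposal has to stand on its own. It does not. The only range you actually prove is $2k\le N\le 3k-1$ (where $\delta\ge N-1$ does force $G=K_N$, and that part is fine). For the other two ranges you describe a strategy, not a proof, and the strategy breaks at exactly the point where the theorem is hard. Your separation argument gives: a $c$-separation forces $\delta\le (N+c-2)/2$, hence $c\ge 2k-1$. That is $(2k-1)$-connectivity, not $2k$-connectivity, and the difference of one is the entire content of the result. If $G$ were $2k$-connected, then (at least for $N$ large) one could finish by Theorem~\ref{klinked}, since $\delta\ge(N+2k-3)/2$ supplies enough edges; the paper's surrounding discussion makes precisely this point, noting that the degree bound only guarantees $(2k-1)$-connectivity and that the residual case is a near-complete $(2k-1)$-separation $(A,B)$ with missing edges confined to $A\cap B$. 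Your sketch never confronts the $(2k-1)$-separation case, which is where \cite{KKY} does its real work (routing the $k$ demanded paths explicitly through two near-cliques glued on $2k-1$ vertices, where a pigeonhole obstruction is barely avoided).

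The ``hub'' step is likewise unsubstantiated. You assert the existence of a set $W$ of order $O(k)$ that realises \emph{every} linkage demand on $2k$ of its vertices; that is essentially asking for a $k$-linked (or highly dense, e.g.\ $K_{3k}$-minor-containing) subgraph, and minimum degree $(N+2k-3)/2$ alone does not hand you one by any argument you give. Even granting $W$, the Menger attachment of $2k$ terminals to $W$ by internally disjoint paths that also avoid the interior routing inside $W$ is a genuine difficulty in all linkage proofs, not a formality, and ``the stated threshold is exactly what is required'' is a claim, not a derivation --- nothing in the sketch produces the constants $2k-3$ or $5k-5$. The middle range $3k\le N\le 4k-2$ is dismissed with the same gesture. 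As written, the proposal is a plausible research plan with the two central lemmas (structure of the $(2k-1)$-separation case, and existence plus attachability of the hub) missing.
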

Note that the degree bounds above imply that the graph $G$ is $(2k-1)$-connected. Further, if $G$ has a $(2k-1)$-separation $(A,B)$,
the bounds allow missing edges in $G[A]$ and $G[B]$ only inside $A\cap B$. 
On the other hand, if $G$ is $2k$-connected, then an average (and thus a minimum) degree constant in $N$ is sufficient,
the best known bound was found by Thomas and Wollan.
\begin{theorem}[\cite{TW}]\label{klinked}
If $G$ is $2k$-connected and $G$ has at least $5k|V(G)|$ edges,
then $G$ is $k$-linked.
\end{theorem}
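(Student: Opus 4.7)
Fix a terminal pairing $(s_1,t_1),\ldots,(s_k,t_k)$; the task is to produce internally disjoint $s_i$--$t_i$ paths for $i=1,\ldots,k$. The plan is to pull the linkage problem into a dense, highly connected subgraph and then route inside it, in three stages.

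\emph{Stage 1 (extracting a dense core).} From $|E(G)|\geq 5k|V(G)|$ the average degree of $G$ is at least $10k$. The standard peeling argument---iteratively delete any vertex of degree below $5k$---must terminate at a nonempty subgraph $W\subseteq G$ with $\delta(W)\geq 5k$, since otherwise every vertex could be removed while destroying fewer than $5k|V(G)|$ edges. By passing to a further subgraph via Mader's theorem on high connectivity in graphs of large minimum degree, I may additionally assume $W$ is connected with connectivity linear in $k$.

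\emph{Stage 2 (attaching the terminals).} Since $G$ is $2k$-connected and $|W|\geq 5k+1>2k$, Menger's theorem (applied in an auxiliary graph obtained by joining the $2k$ terminals to a super-source and the vertices of $W$ to a super-sink) produces $2k$ internally disjoint paths from $s_1,t_1,\ldots,s_k,t_k$ to $2k$ distinct attachment vertices $w_1,w_1',\ldots,w_k,w_k'$ of $W$, with interiors disjoint from $W$. Concatenating these paths with any linkage of the pairs $(w_i,w_i')$ inside $W$ solves the problem, so the original linkage question is reduced to a $k$-linkage question on $W$ with an arbitrary, adversarially chosen pairing on its boundary.

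\emph{Stage 3 (linking inside the core).} Produce internally disjoint $w_i$--$w_i'$ paths in $W$. This is the heart of the proof and the main obstacle. The density $10k$ is below the Kostochka--Thomason threshold of order $k\sqrt{\log k}$ needed to force a $K_{3k}$-minor, so one cannot simply plug into the Robertson--Seymour linkage theorem for graphs containing a large complete minor. Instead, I would use the density of $W$ to exhibit a bramble (equivalently, a tangle) of order linear in $k$, and then convert this global connectivity certificate into $k$ internally disjoint paths between the prescribed pairs by a rerouting/absorption argument. The delicate part is realising this conversion while keeping the paths pairwise disjoint for \emph{every} pairing of the $2k$ attachment vertices; this is essentially what Thomas and Wollan achieve via a minimum-counterexample reduction, and it is what allows the edge bound to be pushed down to the linear value $5k|V(G)|$ rather than a much larger polynomial in $k$.
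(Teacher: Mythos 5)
This statement is quoted from Thomas and Wollan \cite{TW}; the paper gives no proof of it, so your sketch has to stand on its own, and it does not. The decisive gap is Stage~3, which you yourself identify as ``the heart of the proof'' and then fill with an invalid step: a bramble (or tangle) of order linear in $k$ does not convert into a linkage by any rerouting argument. Large grids have brambles of arbitrarily large order, yet planar graphs are never $3$-linked, so no amount of local density or tree-width certifies even a $3$-linkage; the global $2k$-connectivity of $G$ must enter the argument in an essential way, and in your outline it has been discarded by the time you reach Stage~3. Since Stage~3 is the entire content of the theorem, deferring it to ``essentially what Thomas and Wollan achieve'' means the proposal is a framing of the statement rather than a proof.

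There is also a quantitative problem earlier. Peeling gives $\delta(W)\ge 5k$, and Mader's theorem applied to $W$ (average degree $\ge 5k$) yields a subgraph of connectivity only about $\tfrac{5k}{4}<2k$; a graph of connectivity below $2k$ is never $k$-linked, because a $(2k-1)$-separation blocks some pairing of $2k$ vertices. So the core $W$ you construct provably need not admit a linkage for the adversarial pairing that Stage~2 hands it, and the reduction collapses. The actual proof in \cite{TW} proceeds quite differently: it is an induction on a minimal counterexample to the linkage problem, deleting and contracting edges while controlling small separations, with the $2k$-connectivity used throughout rather than only to attach terminals to a dense core.
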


For $k=3$, Thomas and Wollan strengthen this bound to a sharp bound. Given a graph $G$ and a set $X\subset V(G)$,
the pair $(G,X)$ is called {\em linked}, if for every set $\{ x_1,\ldots,x_k,y_1,\ldots,y_k\}\subseteq X$ of
$2k\le |X|$ disjoint vertices, there are $k$ disjoint $x_i-y_i$ paths with no internal vertices in $X$.
\begin{theorem}[\cite{TW2}]\label{3linked}
Let $G$ be a graph, an let $X\subset V(G)$ with $|X|=6$.
If $G$ has no $5$-separation $(A,B)$ with $X\subseteq A$ and $G$ has at least $5|V(G)|-26$ edges outside of $G[X]$,
then $(G,X)$ is linked.
\end{theorem}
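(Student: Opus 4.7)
The plan is to proceed by contradiction, taking a counterexample $(G,X)$ with $|V(G)|$ minimum and, subject to that, with $|E(G)|$ minimum. The target is to reduce to a graph in which $X$ has large pairwise connectivity into the rest of $G$ and $G$ is dense enough to contain a $K_6$-minor with six branch sets each rooted at a distinct vertex of $X$; once such a rooted $K_6$-minor is exhibited, the three required internally $X$-free paths exist for every pairing of $X$ into three pairs, completing the argument.

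First I would force a high minimum degree outside $X$. If some $v\in V(G)\setminus X$ has $\deg(v)\le 4$, then $G-v$ loses at most four edges from the count outside $G[X]$, so the count drops from $\ge 5|V(G)|-26$ to $\ge 5|V(G-v)|-25$, still satisfying the edge hypothesis. One further checks that a $5$-separation of $G-v$ with $X$ on one side would yield a $5$- or $6$-separation of $G$ violating the hypothesis (using $|X|=6$ to rule out the $6$-separation case by a shifting argument). Minimality of $(G,X)$ then supplies a linkage in $G-v$, and hence in $G$, a contradiction. A similar edge-deletion argument raises the minimum degree outside $X$ further, to $5$ or $6$.

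Next I would treat small separators. A cutset $S$ of order at most $5$ whose removal separates $G$ must, by hypothesis, split $X$ across both sides of the cut; at least one side then contains at most three vertices of $X$, and together with $S$ this side supports a strictly smaller linkage problem for the portion of any prescribed pairing living on it. Choosing suitable intermediate endpoints in $S$ and applying induction to each side patches the partial linkages into a full linkage in $G$. What remains is a graph that is essentially $6$-connected with $|E(G)|\ge 5|V(G)|-26$, and here the edge count sits exactly at the extremal threshold for a $K_6$-minor; invoking the rooted $K_6$-minor machinery developed in~\cite{TW2} one prescribes the six branch vertices to be the six vertices of $X$, and contracting the branch sets exhibits the linkage directly.

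The hard part will be the rooted $K_6$-minor extraction. A pure density argument yields an unrooted $K_6$-minor, but forcing each branch set to contain a prescribed vertex of $X$ requires a careful analysis of the extremal configurations where the edge bound is tight and where standard augmenting-path arguments fail. Because the constant $26$ is tight to within $O(1)$, every reduction step must be bookkept at the level of individual edges rather than via loose asymptotic estimates; this tight accounting, together with the case analysis forced by how $X$ and the putative separators interact, is the source of essentially all technical subtlety in the argument.
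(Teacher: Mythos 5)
This statement is not proved in the paper at all: it is quoted from Thomas and Wollan~\cite{TW2}, and establishing it is the entire content of that (roughly thirty-page) paper. So there is no internal proof to compare against, and the only question is whether your sketch could stand on its own. It cannot, and the decisive gap is that your central step --- ``invoking the rooted $K_6$-minor machinery developed in~\cite{TW2}'' to dispatch the essentially $6$-connected case --- is circular: the source you invoke for that machinery is the very theorem being proved. Moreover, Thomas and Wollan's actual argument does not proceed via rooted $K_6$-minors; it is a direct induction on $|V(G)|+|E(G)|$ through a long list of reduction operations (vertex and edge deletions, contractions, and an analysis of small-order separations), each with exact edge bookkeeping. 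There is no known short route from the density hypothesis $5|V(G)|-26$ to a $K_6$-minor with all six branch sets rooted at $X$; an \emph{unrooted} $K_6$-minor already follows from Mader's bound $4n-10$ (so your remark that $5n-26$ ``sits exactly at the extremal threshold for a $K_6$-minor'' is off --- it is near the $K_7$ threshold $5n-15$), while rooting the minor at six prescribed vertices under the mere absence of $5$-separations is, as you yourself concede, ``the source of essentially all technical subtlety'' --- that is, it is the theorem.

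The reduction steps are also gappier than you present them. If $v\notin X$ has degree at most $4$, a $5$-separation $(A,B)$ of $G-v$ with $X\subseteq A$ lifts to a forbidden $5$-separation of $G$ only when all neighbours of $v$ lie on one side; otherwise one obtains a $6$-separation of $G$, which the hypothesis does not exclude, and the ``shifting argument'' you gesture at is exactly where the real work and its interaction with the edge count reside. Likewise, the hypothesis does not force a cutset of order at most $5$ to split $X$; it only forbids those with all of $X$ on one side, so a small cutset may leave $X$ entirely in one part together with a large remainder, and the two-sided induction you propose must specify which rooted linkage problem each part inherits and verify its edge hypothesis there --- precisely the case analysis that occupies most of~\cite{TW2}. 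In short, the outline names plausible reductions but defers every point of substance either to the cited paper or to an unestablished rooted-minor extraction.
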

\begin{corollary}[\cite{TW2}]
If $G$ is $6$-connected and $G$ has at least $5|V(G)|-14$ edges,
then $G$ is $3$-linked.
\end{corollary}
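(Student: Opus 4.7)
The plan is to reduce to Theorem~\ref{3linked}. Given any six distinct vertices $x_1,y_1,x_2,y_2,x_3,y_3\in V(G)$, set $X:=\{x_1,y_1,x_2,y_2,x_3,y_3\}$; the task is to find internally disjoint paths $P_i:x_i-y_i$ for $i=1,2,3$. Since $G$ is $6$-connected, there is no nontrivial $5$-separation $(A,B)$ with $X\subseteq A$, so the first hypothesis of Theorem~\ref{3linked} holds. For the edge-count hypothesis, observe
$$|E(G)|-|E(G[X])|\ge 5|V(G)|-14-|E(G[X])|,$$
which is at least $5|V(G)|-26$ whenever $|E(G[X])|\le 12$. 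Hence when $G[X]$ has at most $12$ edges, Theorem~\ref{3linked} applies and yields the desired linkage immediately.

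The remaining case is $|E(G[X])|\ge 13$: since $K_6$ has $15$ edges, $G[X]$ is obtained from $K_6$ by deleting at most two edges, so here I would argue directly. Let $m\in\{0,1,2\}$ be the number of matching pairs $\{x_i,y_i\}$ that are non-edges in $G$. If $m=0$, the three edges $x_iy_i$ themselves form three internally disjoint paths. If $m=1$, say $x_1y_1\notin E(G)$, take $P_2=x_2y_2$ and $P_3=x_3y_3$; since $G$ is $6$-connected, $G-\{x_2,y_2,x_3,y_3\}$ is still $2$-connected, hence contains an $x_1-y_1$ path to serve as $P_1$.

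The hard subcase is $m=2$. Say $x_1y_1,x_2y_2\notin E(G)$ and $x_3y_3\in E(G)$; set $P_3=x_3y_3$ and search for internally disjoint $P_1:x_1-y_1$ and $P_2:x_2-y_2$ inside the $4$-connected graph $G':=G-\{x_3,y_3\}$. Crucially, all four ``cross'' edges $x_1x_2,x_1y_2,y_1x_2,y_1y_2$ lie in $G'$. My plan is to contract the edge $y_1y_2$ in $G'$ to a single vertex $z$, obtaining a graph $G^\ast$ that remains at least $3$-connected. By $2$-connectivity of $G^\ast$ there is an $x_1-x_2$ path through $z$, and I would try to choose such a path whose two edges at $z$ come from a neighbor of $y_1$ on one side and of $y_2$ on the other; uncontracting and splitting at the edge $y_1y_2$ then produces the required $P_1$ and $P_2$. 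The main obstacle is forcing this ``transverse'' Menger choice, i.e.\ ruling out the degenerate scenario in which every $x_1-x_2$ path through $z$ enters and leaves via the same original endpoint. Handling this via a short local analysis, exploiting the $6$-connectivity of $G$ and the many cross edges already present in $G[X]$, is the crux of the remaining work.
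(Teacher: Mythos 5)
The paper states this corollary as a quoted result from \cite{TW2} and gives no proof of its own, so I am judging your derivation on its merits. Your reduction to Theorem~\ref{3linked} when $|E(G[X])|\le 12$ is correct, as are the subcases $m=0$ and $m=1$ (for $m=1$, connectedness of $G-\{x_2,y_2,x_3,y_3\}$ already suffices). The gap is exactly where you say it is: the subcase $m=2$. The contraction-plus-``transverse Menger'' strategy cannot be completed by a local analysis, because the hypotheses you are using there --- $G'=G-\{x_3,y_3\}$ is $4$-connected and contains the four cross edges $x_1x_2,x_1y_2,y_1x_2,y_1y_2$ --- do not imply the existence of disjoint $x_1$--$y_1$ and $x_2$--$y_2$ paths. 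A $4$-connected planar graph with $x_1,x_2,y_1,y_2$ in this cyclic order on a face (so that all four cross edges are present) has no such pair of paths; this is precisely the obstruction in the $2$-linkage theorem. So any correct argument must bring in more than connectivity and the edges inside $X$, and the degenerate scenario you hope to rule out is genuinely realizable at the level of the structure you are exploiting.

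The clean way to close the gap uses a tool the paper itself quotes two lines below: Jung's theorem \cite{J} that every $4$-connected non-planar graph is $2$-linked. In the $m=2$ subcase set $G'=G-\{x_3,y_3\}$; then $G'$ is $4$-connected and
$$|E(G')|\ \ge\ 5|V(G)|-14-\bigl(d(x_3)+d(y_3)-1\bigr)\ \ge\ 3|V(G)|-11\ >\ 3\bigl(|V(G)|-2\bigr)-6,$$
so $G'$ is non-planar by Euler's formula, hence $2$-linked, which yields the required disjoint $x_1$--$y_1$ and $x_2$--$y_2$ paths avoiding $P_3=x_3y_3$. With this substitution your case analysis becomes a complete proof; without it, the $m=2$ case is unproved.
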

\begin{corollary}[\cite{TW2}]
If $G$ is $6$-connected and $\delta(G)\ge 10$,
then $G$ is $3$-linked.
\end{corollary}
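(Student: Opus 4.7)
The plan is to reduce the statement to the preceding corollary (the one with the edge-count hypothesis $|E(G)|\ge 5|V(G)|-14$), which we are allowed to cite. To do this, I would simply translate the minimum degree condition into an edge count via the handshaking lemma: if $\delta(G)\ge 10$, then
\[
|E(G)| \;\ge\; \tfrac{1}{2}\cdot 10\cdot |V(G)| \;=\; 5|V(G)| \;\ge\; 5|V(G)|-14.
\]
Combined with the $6$-connectivity hypothesis that is shared between the two corollaries, this is exactly the hypothesis of the preceding corollary, which then hands us the conclusion that $G$ is $3$-linked.

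I would also briefly note that the $6$-connectivity assumption forces $|V(G)|\ge 7$, and the degree bound $\delta(G)\ge 10$ forces $|V(G)|\ge 11$, so there are no trivial small-graph edge cases to worry about before invoking the preceding corollary.

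There is essentially no obstacle in this step: the content of the result lies entirely in the edge-count corollary (ultimately in Theorem~\ref{3linked}), and the present corollary is just the clean reformulation in terms of minimum degree. The only thing to be careful about is not to claim anything sharper than what the handshaking bound gives; in particular, I would not try to shave the constant $-14$, since the point here is only that $5N$ already dominates $5N-14$.
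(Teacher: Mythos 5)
Your derivation is correct and is exactly the intended one: the paper states this corollary (citing \cite{TW2}) as an immediate consequence of the preceding edge-count corollary, and the handshaking computation $|E(G)|\ge \tfrac12\cdot 10\cdot|V(G)|=5|V(G)|\ge 5|V(G)|-14$ is the whole content of that reduction. No issues.
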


Similarly, bounds have been known for a long time for the case $k=2$.
\begin{theorem}[\cite{J}]
Let $G$ be a $4$-connected graph, which is either non-planar or triangulated. Then $G$ is $2$-linked.
\end{theorem}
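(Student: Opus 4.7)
The plan is to argue by contradiction. Suppose $G$ is $4$-connected, satisfies the additional hypothesis, and yet is not $2$-linked, so some injection $\tau$ picks terminals $s_1,t_1,s_2,t_2$ for which no pair of internally disjoint $s_1$--$t_1$ and $s_2$--$t_2$ paths exists. I would take $G$ together with this terminal set as a counterexample minimizing $|V(G)|+|E(G)|$ subject to $4$-connectivity being preserved.

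The first step is a set of structural reductions from minimality. Any edge whose deletion keeps the graph $4$-connected (and preserves the hypothesis) must already be absent, and a similar statement for contraction gives rigid local structure around non-terminal vertices; routine arguments also let me assume the four terminals are pairwise non-adjacent (otherwise suppress the edge and a terminal, yielding a smaller counterexample). In addition, any $4$-separation $(A,B)$ in $G$ must place the terminals in a controlled way across $A\cap B$, because otherwise one side is a smaller $4$-connected instance to which minimality applies.

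The heart of the proof is to show that the only remaining obstruction in the $4$-connected setting is a planar embedding of $G$ in which $s_1,s_2,t_1,t_2$ appear on the boundary of a single face in the \emph{crossed} cyclic order $s_1,s_2,t_1,t_2$. To establish this, I would repeatedly exploit $4$-cuts: each $4$-cut $S$ in $G$ splits the counterexample into two pieces, and by applying minimality (or the $4$-connected routing lemma on paths through specified vertices, in the Seymour--Shiloach--Thomassen style) to one side I can route three of the four needed subpaths and close the linkage through the other side. The only configurations for which no such closure exists force a planar embedding of all of $G$ with the four terminals on a common face in the bad cyclic order; this is the classical single-obstruction theorem of Jung and Seymour--Thomassen.

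Once this reduction is in place, the two hypotheses of the theorem kill the obstruction immediately. If $G$ is non-planar, the required plane embedding does not exist, contradiction. If $G$ is triangulated, meaning every face of its plane embedding is a triangle, no face has more than three vertices on its boundary, so four terminals cannot lie on one face. In either case the assumed counterexample cannot arise. The main obstacle is the middle step: rigorously showing that the planar crossed configuration is the sole obstruction. This is the classical combinatorial core, and it is where the careful $4$-cut case analysis — distinguishing how $A\cap B$ intersects $\{s_1,t_1,s_2,t_2\}$ and rerouting across the cut — has to be carried out in full, with appeals to $4$-connectivity and Menger's theorem whenever a single additional path must be produced to complete the linkage.
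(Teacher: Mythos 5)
The paper does not prove this statement at all---it is quoted as a known result of Jung \cite{J}---so the only fair comparison is with the classical proofs in the literature. Your closing step is correct and is indeed how the theorem is understood today: once one knows that for a $4$-connected graph the \emph{unique} obstruction to linking $s_1t_1$ and $s_2t_2$ is a plane embedding with the four terminals on a common face in the crossed cyclic order, non-planarity rules the obstruction out outright, and in a triangulation every face boundary is a triangle (and the embedding of a $4$-connected planar graph is unique by Whitney), so no face can carry four terminals. That deduction is sound.

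The genuine gap is that the middle step---``the planar crossed configuration is the sole obstruction''---is the entire mathematical content of the theorem, and you assert it rather than prove it. It is the two-disjoint-paths characterization of Jung, Seymour, Shiloach and Thomassen, a result strictly stronger than the statement you are asked to prove, so invoking it as ``the classical single-obstruction theorem'' reduces the proof to a citation of a harder theorem whose proof you do not supply. The minimal-counterexample framework you set up does not by itself deliver it: your edge-deletion and contraction reductions are conditioned on ``preserving the hypothesis,'' but deleting edges can destroy non-planarity, so minimality gives you much less traction than in the unconditional characterization proofs; and the ``$4$-cut rerouting'' step is vacuous in a $4$-connected counterexample unless the cut separates terminals in a very restricted way, which is precisely where the hard case analysis (Thomassen's or Seymour's) lives. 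As written, the argument is a correct derivation of the theorem from an unproved stronger one, which is acceptable as a reduction but not as a proof.
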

\begin{corollary}
If $G$ is $4$-connected and $\| G\|\ge 3|G|-6$, then $G$ is $2$-linked.
\end{corollary}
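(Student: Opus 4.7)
The plan is to reduce the corollary directly to Jung's theorem (the preceding Theorem) by using the edge count to dichotomize on planarity.

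First I would observe that if $G$ is non-planar, then Jung's theorem applies immediately to the $4$-connected non-planar graph $G$, yielding that $G$ is $2$-linked. So it remains to handle the planar case.

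For the planar case, the key ingredient is Euler's formula: every simple planar graph on $n\ge 3$ vertices satisfies $\|G\|\le 3|G|-6$, with equality if and only if $G$ is a (planar) triangulation, i.e.\ every face of some (equivalently every) planar embedding is bounded by a triangle. Since $4$-connectedness forces $|G|\ge 5$, the hypothesis $\|G\|\ge 3|G|-6$ together with Euler's inequality gives $\|G\|=3|G|-6$, so $G$ must be a triangulation. Jung's theorem then again guarantees that $G$ is $2$-linked.

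There is really no obstacle here beyond setting up this case split; the only point worth flagging is the convention that in Jung's theorem ``triangulated'' refers to a maximal planar (triangulated) graph rather than a chordal graph, which is the reading that makes the corollary an immediate consequence. Under that reading, combining Jung's theorem with Euler's formula finishes the proof in two short cases.
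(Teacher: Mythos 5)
Your proof is correct and is exactly the intended argument behind the paper's (unproved) corollary: split on planarity, apply Jung's theorem directly in the non-planar case, and use Euler's formula $\|G\|\le 3|G|-6$ to force $G$ to be a triangulation in the planar case. Your reading of ``triangulated'' as maximal planar is indeed the one intended here.
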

\begin{corollary}
If $G$ is $4$-connected and $\delta(G)\ge 6$, then $G$ is $2$-linked.
\end{corollary}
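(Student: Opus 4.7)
The plan is to derive this directly from Jung's theorem by showing that the minimum degree condition forces $G$ to be non-planar, which is one of the two hypotheses under which Jung's theorem applies.

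First I would compute a lower bound on the number of edges from the degree condition. Since $\delta(G)\ge 6$, the handshake lemma gives
\[
\|G\| \;=\; \tfrac{1}{2}\sum_{v\in V(G)}\deg(v) \;\ge\; 3|G|.
\]
Next, I would recall the Euler-formula bound: every simple planar graph on at least $3$ vertices satisfies $\|G\|\le 3|G|-6$. Because $G$ is $4$-connected we certainly have $|G|\ge 5$, so this bound applies, and the strict inequality $3|G| > 3|G|-6$ shows that $G$ cannot be planar.

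Having established that $G$ is non-planar, I would invoke Jung's theorem (the $4$-connected, non-planar case) to conclude that $G$ is $2$-linked. Alternatively, one could route the argument through the preceding edge-count corollary, since $\|G\|\ge 3|G|\ge 3|G|-6$, but both paths rest on exactly the same observation. There is no genuine obstacle here; the only thing to check is that the planar edge bound is available, which uses nothing beyond $|G|\ge 3$, comfortably guaranteed by $4$-connectivity.
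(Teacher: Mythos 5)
Your argument is correct and is exactly the intended derivation: the paper states this as a corollary of Jung's theorem via the preceding edge-count corollary, and your computation $\|G\|\ge 3|G|>3|G|-6$ forcing non-planarity is precisely that chain. Nothing further is needed.
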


In a sense, there is a rather sharp threshold for $k$-linked graphs. If $\kappa(G)=2k-2$, then $G$ is not $k$-linked. 
If $\kappa(G)=2k-1$, we need to give very strong (linear) degree conditions to guarantee that $G$ is $k$-linked. If  
$\kappa(G)=2k$, then weak (constant) degree conditions suffice.
Our program is to study similar behavior in the more general setting of $H$-linked graphs. 
In particular, we want to study the following quantity.

\begin{definition}
Let $H$ be a multigraph, and let $k\ge 0$. Choose $N\ge k+1$ large enough, so that $K^{N}$ is $H$-linked, and define
$$
\delta(k,H,N):=\min\{\delta\in \N_{\ge k}:\mbox{ every $k$-connected graph on $N$ vertices with $\delta(G)\ge \delta$ 
is $H$-linked}\}.
$$
\end{definition}
Due to the following simple fact, we will restrict our attention to multigraphs $H$ without isolated vertices
for the rest of the paper.
\begin{fact}
Let $H$ be a multigraph. Then $\delta(k+1,H\cup v,N+1)=\delta(k,H,N)+1$.
\end{fact}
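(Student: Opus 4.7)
The plan is to verify the equality by establishing the two matching inequalities and, in each direction, reducing to the definition of $\delta(k,H,N)$ with a single-vertex trick.

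For the upper bound $\delta(k+1,H\cup v,N+1)\le\delta(k,H,N)+1$, I would take an arbitrary $(k+1)$-connected graph $G'$ on $N+1$ vertices with $\delta(G')\ge\delta(k,H,N)+1$ and an arbitrary injection $\tau':V(H\cup v)\to V(G')$, then simply delete $w:=\tau'(v)$. The graph $G:=G'-w$ has $N$ vertices, minimum degree at least $\delta(G')-1\ge\delta(k,H,N)$, and (by the standard fact that removing a single vertex drops connectivity by at most one) connectivity at least $\kappa(G')-1\ge k$. By the definition of $\delta(k,H,N)$, $G$ is $H$-linked, so the restriction $\tau'|_{V(H)}$ is realizable in $G\subseteq G'$. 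Since the isolated vertex $v$ carries no edges of $H\cup v$, these are already the paths witnessing $(H\cup v)$-linkage.

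For the lower bound $\delta(k+1,H\cup v,N+1)\ge\delta(k,H,N)+1$, I would appeal to the minimality in the definition of $\delta(k,H,N)$ to produce a witness: a $k$-connected graph $G$ on $N$ vertices with $\delta(G)=\delta(k,H,N)-1$ that fails to be $H$-linked for some injection $\tau:V(H)\to V(G)$. Form $G'$ by joining a new vertex $v^*$ to every vertex of $G$. Then $|G'|=N+1$, $\delta(G')=\delta(G)+1=\delta(k,H,N)$, and $\kappa(G')\ge\kappa(G)+1\ge k+1$ (since any separator of $G'$ is forced to contain $v^*$). Extending $\tau$ by $\tau'(v):=v^*$, if $G'$ were $(H\cup v)$-linked, the paths realizing the edges of $H$ would have to internally avoid $v^*=\tau'(v)$ and therefore lie inside $G'-v^*=G$, contradicting the choice of $\tau$. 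Hence $\delta(k+1,H\cup v,N+1)>\delta(G')=\delta(k,H,N)$.

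The only real subtlety is definitional rather than combinatorial: both directions rely on the convention that the paths realizing the edges of $H$ avoid $\tau(V(H))$ internally, not merely that they are pairwise internally disjoint. This convention is forced on us by the paper's own identification of $H$-linkage with $k$-connectivity, $k$-orderedness and $k$-linkage in the three special cases, so I would state it once up front and then run both arguments above without further comment. Beyond this, the upper bound is a one-line vertex-deletion argument and the lower bound is the standard trick of attaching a universal vertex to lift an $H$-linkage obstruction to an $(H\cup v)$-linkage obstruction, so no step should present a genuine difficulty.
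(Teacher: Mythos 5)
Your proof is correct; the paper states this as a ``simple fact'' without giving any argument, and your two-direction proof (deleting $\tau'(v)$ for the upper bound, attaching a universal vertex to a minimal non-$H$-linked witness for the lower bound), together with your explicit observation that the realizing paths must internally avoid all of $\tau(V(H))$ --- without which the isolated vertex would impose no constraint and the fact would be false --- is exactly the intended argument. The only point you leave implicit is the degenerate case $\delta(k,H,N)=k$, where no witness graph with smaller minimum degree exists, but there the lower bound $\delta(k+1,H\cup v,N+1)\ge k+1$ holds trivially because the minimum in the definition is taken over $\N_{\ge k+1}$.
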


We can state some of the above Theorems and facts along the lines of our program.
\begin{theorem}
Let $H$ be a connected bipartite multigraph with $\ell$ edges, where one of the two parts of the bipartition contains only one vertex.
Then 
$$
\delta(k,H,N)=\begin{cases}

\left\lceil\frac{N+\ell-2}{2}\right\rceil,&\mbox{ for }k< \ell,\\
k, &\mbox{ for }k\ge \ell.
\end{cases}
$$
\end{theorem}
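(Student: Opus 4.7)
The plan is to split into two cases according to whether $k\ge\ell$ or $k<\ell$, and to handle the upper and lower bounds on $\delta(k,H,N)$ separately. The key uniform ingredient is an auxiliary-graph construction that reduces the multiplicity version of a fan to the classical fan lemma.

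For the case $k\ge\ell$ the inequality $\delta(k,H,N)\ge k$ is built into the definition (any $k$-connected graph has $\delta\ge k$). For the matching upper bound I would show that every $k$-connected graph with $k\ge \ell$ is $H$-linked. Given an injection $\tau$ with $u=\tau(v_0)$, $w_i=\tau(v_i)$ and multiplicities $a_i$, construct $G'$ from $G$ by adding, for each $i$, new ``copies'' $w_i^{(2)},\ldots,w_i^{(a_i)}$ joined to exactly the vertices in $N_G(w_i)$; set $w_i^{(1)}=w_i$. A short argument shows $G'$ is $\ell$-connected: if $T'$ is a cut of $G'$ of size $<\ell$, then either $T'\cap V(G)$ is a cut of $G$ of size $<\ell\le k$ (impossible), or $T'$ contains all of $N_G(w_i)$ for some $i$ in order to isolate a copy, but then $|T'|\ge \deg_G(w_i)\ge\delta(G)\ge\ell$. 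The classical fan lemma in $G'$ now produces $\ell$ internally disjoint paths from $u$ to the $\ell$ distinct targets $\{w_i^{(j)}\}$. Identifying each $w_i^{(j)}$ back with $w_i$ yields $a_i$ internally disjoint $u$--$w_i$ paths for each $i$, jointly internally disjoint, because the internal vertices of the fan lie in $V(G)\setminus\{u,w_1,\ldots,w_m\}$ and are left untouched.

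For $k<\ell$ the lower bound comes from the canonical extremal graph: two cliques $A$ and $B$ with $|A\cap B|=\ell-1\ge k$ and $|A|+|B|=N+\ell-1$, with sizes as balanced as possible. This graph is $(\ell-1)$-connected (hence $k$-connected) with $\delta=\lceil(N+\ell-2)/2\rceil-1$, and it fails to be $H$-linked because placing $\tau(v_0)$ in $A\setminus B$ and every $\tau(v_i)$ in $B\setminus A$ forces all $\ell$ paths through $A\cap B$, which has only $\ell-1$ vertices. For the matching upper bound the degree hypothesis is first used to upgrade connectivity: if a cut $T$ of size $<\ell$ existed, each of the (at least two) resulting components would contain at least $\delta(G)-|T|+1\ge\lceil(N-\ell+2)/2\rceil$ vertices, giving a total of at least $N+1$ vertices, a contradiction. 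Hence $G$ is $\ell$-connected, and since $\delta(G)\ge\lceil(N+\ell-2)/2\rceil\ge\ell$ (using $N\ge\ell+1$, which is necessary for $K_N$ to be $H$-linked in the first place), the same copy-construction and fan-lemma argument applies.

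The main (and essentially only) obstacle is verifying $\ell$-connectivity of the auxiliary graph $G'$ so that the fan lemma applies; this is the step that really uses $\delta(G)\ge\ell$, via the observation that the only way a small vertex set in $G'$ can separate a copy $w_i^{(j)}$ from the rest is to contain all of $N_G(w_i)$. Once this is established, the translation step back to $G$ is routine, so the whole argument reduces to (i) a degree-counting lemma to promote $k$-connectivity to $\ell$-connectivity when $k<\ell$, and (ii) a construction that reduces the ``star with multiplicities'' problem to a classical fan in a slightly enlarged graph.
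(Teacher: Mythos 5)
Your proof is correct and follows essentially the route the paper intends: the paper states this theorem without proof, as a consequence of the equivalence ``star-linked $=$ $\ell$-connected'' (fan lemma) together with Fact~\ref{f2} to pass to multi-edges, plus the standard two-clique example and the degree count showing $\delta(G)\ge\lceil\frac{N+\ell-2}{2}\rceil$ forces $\ell$-connectivity. Your vertex-copying construction is just an inlined proof of the Fact~\ref{f2} reduction for this special case, so the arguments coincide in substance.
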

\begin{theorem}
$$
\begin{array}{rcccll}\label{Tlk2}
 &&\delta(k, \ell~K^2,N)
&=&\left\lceil\frac{N+2\ell-3}{2}\right\rceil+o(1), & \mbox{ if }k<2\ell,\\
\max\{ 2\ell+2-o(1),k\} &\le&\delta(k, \ell~K^2,N)&\le& \max\{ 10\ell,k\}, & \mbox{ if }k\ge 2\ell.
\end{array}
$$
\end{theorem}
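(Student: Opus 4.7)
Since $\ell K^2$-linked coincides with $\ell$-linked, the statement consists of two upper bounds and two lower bounds. The upper bounds follow almost immediately from the results already cited.

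For $k<2\ell$, I would apply Lemma~\ref{klinked4} with its parameter $k$ replaced by $\ell$: the hypothesis $\delta(G)\ge\lceil(N+2\ell-3)/2\rceil$ already forces $(2\ell-1)$-connectivity, so the $k$-connectivity hypothesis is redundant, and for $N\ge 4\ell-1$ we get $\ell$-linkedness directly from Lemma~\ref{klinked4}. The two smaller ranges of that lemma contribute only an $O(1)$ correction to $\delta$, absorbed into the $o(1)$ as $N\to\infty$. For $k\ge 2\ell$, any $k$-connected $G$ satisfies $\delta(G)\ge k\ge 2\ell$, hence is $2\ell$-connected; if in addition $\delta(G)\ge 10\ell$, then $\|G\|\ge 5\ell N$ and Theorem~\ref{klinked} delivers $\ell$-linkedness. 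Combined with the trivial $\delta(G)\ge k$, this yields the bound $\max\{10\ell,k\}$.

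For the dense lower bound ($k<2\ell$) I would use the standard two-overlapping-cliques construction: set $V(G)=A\cup B$ with both $G[A]$ and $G[B]$ complete, $|A\cap B|=2\ell-3$, and $|A|,|B|$ as balanced as possible. Then $\delta(G)=\min(|A|,|B|)-1$, which differs from $\lceil(N+2\ell-3)/2\rceil$ by at most one. To witness non-$\ell$-linkedness, place two terminals in $A\setminus B$, two in $B\setminus A$, and the remaining $2\ell-4$ terminals in $A\cap B$. The $\ell-2$ terminal pairs that lie entirely in the separator can each be linked by a single edge inside $G[A]$, using $2\ell-4$ of the $2\ell-3$ separator vertices as endpoints; only one separator vertex remains free as an internal vertex, but the two crossing pairs require two internally disjoint transits through $A\cap B$, a contradiction. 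The base construction is $(2\ell-3)$-connected, so it already covers $k\le 2\ell-3$; for the two borderline values $k\in\{2\ell-2,2\ell-1\}$ I would enrich $G[A\cap B]$ by a few extra vertices and edges in order to lift $\kappa$ to $k$ without raising $\delta$.

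The sparse lower bound is the only genuinely difficult part. The inequality $\delta(G)\ge k$ is forced by $k$-connectivity. For the $2\ell+2-o(1)$ term I would exhibit, for every $k\in\{2\ell,2\ell+1\}$ and all sufficiently large $N$, a $k$-connected graph with $\delta=2\ell+1$ that fails $\ell$-linkedness. A natural template is a ``necklace'' obtained by identifying $(2\ell)$-vertex separators between consecutive copies of a small rigid gadget (for example a near-$K_{2\ell+2}$ minus a well-chosen matching), arranged so that some placement of the $2\ell$ terminals demands $\ell+1$ internally disjoint transits through a single $(2\ell)$-separator; Menger's theorem then blocks the linking. The $-o(1)$ slack handles values of $N$ not exactly divisible by the period of the necklace. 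Designing this gadget is the main obstacle: the minimum degree is so close to the Thomas--Wollan threshold that most na\"ive constructions end up being $\ell$-linked after all, and one needs a rigid $(2\ell)$-separation whose structure survives every legal routing attempt.
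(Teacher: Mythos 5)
Your upper bounds are exactly the route the paper (implicitly) takes: Lemma~\ref{klinked4} handles $k<2\ell$, Theorem~\ref{klinked} handles $k\ge 2\ell$, and your two overlapping cliques with a $(2\ell-3)$-vertex separator is the standard witness for the dense lower bound when $k\le 2\ell-3$. But there are two gaps on the lower-bound side.

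The serious one is the sparse lower bound $2\ell+2-o(1)$ for $k\in\{2\ell,2\ell+1\}$, precisely the part you leave as a design problem, and the mechanism you propose for it cannot work. If a $(2\ell)$-separation carries $t$ of the $2\ell$ terminals on its cut, then at most $\lfloor(2\ell-t)/2\rfloor$ of the $\ell$ pairs are split by it, while $2\ell-t$ cut vertices remain free to serve as internal vertices, and $2\ell-t\ge\lfloor(2\ell-t)/2\rfloor$ always; in particular no placement of terminals ever ``demands $\ell+1$ transits'' through a single $(2\ell)$-separator, since $\ell$ pairs demand at most $\ell$. So a Menger-type counting obstruction is unavailable at connectivity $2\ell$, and any necklace built on that idea will indeed, as you fear, turn out to be $\ell$-linked. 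The obstruction has to be topological. The paper's construction: take a $5$-connected planar graph $P$ that is not a triangulation, with $\delta(P)=5$; by Jung's characterization it is not $2$-linked (choose the four terminals in the bad cyclic order on a face of length at least $4$). Now add $2\ell-4$ universal vertices. The result is $(2\ell+1)$-connected with minimum degree $2\ell+1$, and assigning the universal vertices as the terminals of $\ell-2$ of the pairs removes them from use as internal vertices, so the remaining two pairs would have to be linked inside $P$ --- impossible. This single example settles both $k=2\ell$ and $k=2\ell+1$.

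The lesser gap is $k\in\{2\ell-2,2\ell-1\}$ in the dense regime. Your graph is only $(2\ell-3)$-connected there, and ``enriching $G[A\cap B]$'' is not an innocuous fix: essentially any new vertices or edges that raise the connectivity create new routes across the separation and tend to make your displaced terminal assignment linkable again (for instance, two added universal vertices leave three free transit vertices for the two crossing pairs). For these two values of $k$ one has to invoke the actual sharpness examples from \cite{KKY}, which is what the paper silently does; your repair as stated does not go through.
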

\begin{theorem}\label{T3k2}
$$
\begin{array}{rcccll}
&& \delta(k, 3~K^2,N) 
&=&\left\lceil\frac{N+3}{2}\right\rceil+o(1), & \mbox{ if }k<6,\\
\max\{ 8-o(1),k\} &\le&\delta(k, 3~K^2,N)&\le& \max\{10,k\}, & \mbox{ if }k\ge 6.
\end{array}
$$
\end{theorem}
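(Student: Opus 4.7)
The plan splits into upper and lower bounds, further split between the regimes $k<6$ and $k\ge 6$. For the upper bounds, the case $k\ge 6$ follows immediately from the corollary of Theorem~\ref{3linked} stating that every $6$-connected graph with $\delta\ge 10$ is $3$-linked: the hypotheses give $G$ that is $6$-connected with $\delta(G)\ge 10$, hence $3$-linked (which is the same as $3K^2$-linked). When $k<6$ I apply Lemma~\ref{klinked4} with its linking parameter equal to $3$: for $N\ge 11$ the bound $\delta(G)\ge\lceil(N+3)/2\rceil$ alone already implies $3$-linkedness, so the $k$-connectedness plays no role; smaller values of $N$ are absorbed by the $+o(1)$.

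For the lower bound, $\delta(k,3K^2,N)\ge k$ is immediate from $\delta(G)\ge\kappa(G)$. When $k<6$, the remaining lower bound $\lceil(N+3)/2\rceil-o(1)$ should follow from the sharpness of Lemma~\ref{klinked4} at $k=3$: I would exhibit, for each large $N$, a graph $G_N$ with a $5$-separation $(A,B)$, where $G_N[A]$ and $G_N[B]$ are cliques save for a carefully chosen constant-sized pattern of missing edges (mostly inside $A\cap B$) that both drops the minimum degree by one and simultaneously witnesses a placement of three source--sink pairs straddling the separator which cannot be routed by internally disjoint paths through $A\cap B$. Since $G_N$ is $5$-connected, it serves as a witness for every $k\le 5$.

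For $k\ge 6$, one needs a family of $6$-connected graphs with $\delta(G)$ close to $7$ that fail $3$-linkedness. The seed is the cocktail party graph $K_{2,2,2,2}$, which is $6$-regular, $6$-connected, and not $3$-linked: placing a source--sink pair in each of three parts $A$, $B$, $C$, every linking path is forced to use an internal vertex from the fourth part $D$, but $|D|=2<3$, blocking the third path. To lift this to arbitrarily large $N$ with $\delta=7$, one would add structure that raises each degree by exactly one without creating a new ``highway'' vertex (such as a universal one) that would restore $3$-linkedness. The main obstacle is precisely designing this extension: naive modifications (joining with a vertex or clique, Cartesian products, blow-ups) either drop some vertex below $\delta=7$ or supply a shortcut that restores the missing third path, so a careful explicit construction together with a case analysis ruling out all routings of three source--sink pairs is required.
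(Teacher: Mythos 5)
Your upper bounds and your $k<6$ lower bound follow the same route as the paper: the $k<6$ upper bound is Lemma~\ref{klinked4} with linking parameter $3$, the $k\ge 6$ upper bound is the corollary of Theorem~\ref{3linked}, and the $\lceil (N+3)/2\rceil$ lower bound comes from the extremal configurations for Lemma~\ref{klinked4} built around a $5$-separation (which are $5$-connected and hence witnesses for all $k\le 5$). The genuine gap is the lower bound $8-o(1)$ for $k\ge 6$, which is the only part of this theorem that needs a new construction and which you explicitly leave open. Your seed $K_{2,2,2,2}$ is correctly analyzed, but it is $6$-regular, so even if you could grow it you would still have to raise every degree to $7$ without restoring $3$-linkedness, and you offer no mechanism for either step.

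The paper closes this gap with a construction stated just before Theorem~\ref{T3k2}: take a $5$-connected planar graph $H$ on $N-2$ vertices that is not a triangulation, and join it to two universal vertices. The resulting graph $G$ is $7$-connected (so it serves for $k=6,7$; for $k\ge 8$ the bound is just $\delta\ge k$), has $\delta(G)=7$ because planarity forces a vertex of degree exactly $5$ in $H$, and exists for every sufficiently large $N$. It is not $3$-linked: let $s_3,t_3$ be the two universal vertices and let $s_1,s_2,t_1,t_2$ appear in this cyclic order on a non-triangular face of $H$. In any linkage the paths $P_1$ and $P_2$ must avoid the terminals $s_3,t_3$, hence lie entirely in the planar graph $H$, where the classical characterization of $2$-linkages (four terminals alternating on a face of a plane graph) rules out disjoint $s_1$--$t_1$ and $s_2$--$t_2$ paths. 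This ``planar core plus a universal terminal pair'' device is the key idea your proposal is missing; with it, no case analysis of routings is required.
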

\begin{theorem}\label{T2k2}
$$
\begin{array}{rcccll}
&& \delta(k, 2~K^2,N) 
&=&\left\lceil\frac{N+1}{2}\right\rceil+o(1), & \mbox{ if }k<4,\\
\max\{ 6-o(1),k\} &\le&\delta(k, 2~K^2,N)&\le& \max\{6,k\}, & \mbox{ if }k\ge 4.
\end{array}
$$
\end{theorem}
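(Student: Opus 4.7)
The starting observation is that ``$2K^2$-linked'' and ``$2$-linked'' are synonymous, since $2K^2$ is a matching of two edges and both notions ask for two internally disjoint paths between two prescribed pairs of terminals. Each of the four bounds in Theorem~\ref{T2k2} therefore reduces to a statement about $2$-linkedness, essentially assembled from results already recalled in the introduction.

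\textbf{Upper bounds.} For $k\le 3$, I would plug $k=2$ into Lemma~\ref{klinked4}: for $N\ge 7$, $\delta(G)\ge\lceil (N+1)/2\rceil$ already forces $G$ to be $2$-linked, and the small cases $N<7$ shift the bound only by a fixed additive constant absorbed in the $o(1)$. For $k\ge 4$, a $k$-connected graph with $\delta(G)\ge\max\{6,k\}$ is in particular $4$-connected with $\delta(G)\ge 6$, so the quoted corollary of Jung's theorem delivers $2$-linkedness.

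\textbf{Lower bounds.} The bound $\delta(k,2K^2,N)\ge k$ is immediate from the definition. For $k\le 3$ I would exhibit, for each large $N$, a $3$-connected graph on $N$ vertices with $\delta=\lceil(N+1)/2\rceil-1$ (up to parity $\pm 1$) that is not $2$-linked: take two cliques of order roughly $(N+1)/2$ sharing a single vertex $c$, and add two crossing edges $u_1v_1$ and $u_2v_2$ with $u_i$ in one clique and $v_i$ in the other. One checks that $\kappa(G)=3$, with minimum cut $\{c,u_1,u_2\}$, and that the choice of terminals $x_1=u_1$, $y_1=v_2$, $x_2=u_2$, $y_2=v_1$ forces every admissible $x_i$-$y_i$ path to pass through $c$, ruling out two internally disjoint such paths. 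For $k\ge 4$, the only non-trivial part of the lower bound is the contribution $6-o(1)$, relevant when $k\in\{4,5\}$, which demands a $k$-connected graph with $\delta=5$ that is not $2$-linked; this is supplied by specializing the construction behind the general lower bound $2\ell+2-o(1)$ for $\ell K^2$-linked graphs to $\ell=2$.

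\textbf{Main obstacle.} The upper bounds and the trivial lower bound $k$ fall out of the cited results without any real work. The combinatorial content lies in the lower-bound constructions. The $k\le 3$ example is transparent once the correct labelling of the four terminals is spotted. The hardest step will be the $k\in\{4,5\}$ case: one must produce for every sufficiently large $N$ a $k$-connected graph with $\delta=5$ that fails to be $2$-linked, and naive padding by a clique tends either to push $\delta$ above $5$ or to recover $2$-linkedness. Designing an inflation that preserves both the connectivity and the obstruction to $2$-linking is where I expect the delicate tuning to concentrate.
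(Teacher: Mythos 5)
Your reduction to $2$-linkedness and the way you assemble the upper bounds from Lemma~\ref{klinked4} (with $k=2$) and from the corollary to Jung's theorem is exactly what the paper does; the trivial lower bound $k$ and your two-clique construction for $k\le 3$ (two cliques meeting in a single vertex $c$ plus two crossing edges, with the terminals labelled so that each of the two required paths must avoid an endpoint of each crossing edge and is therefore forced through $c$) are correct and in the spirit of the paper's constructions elsewhere. The one genuine gap is the piece you yourself flag: you never actually produce, for $k\in\{4,5\}$, a $k$-connected graph with $\delta=5$ that is not $2$-linked, and your appeal to ``the construction behind the general lower bound $2\ell+2-o(1)$ for $\ell K^2$'' is circular, since that bound rests on exactly the example you are missing. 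The paper's answer is a one-liner and needs none of the ``delicate tuning'' you anticipate: take any $5$-connected planar graph that is not a triangulation. Planarity automatically forces a vertex of degree at most $5$, so $\delta=5$ with no balancing act; $5$-connectivity gives $k$-connectivity for $k\le 5$; and if $v_1,v_2,v_3,v_4$ lie in this cyclic order on a face of length at least $4$, the Jordan curve theorem rules out disjoint $v_1$--$v_3$ and $v_2$--$v_4$ paths — this is precisely the exceptional case in Jung's theorem quoted in the introduction. Such graphs exist on $N$ vertices for all sufficiently large $N$ (e.g.\ delete an edge joining two vertices of degree at least $6$ in a suitable large $5$-connected planar triangulation), which is all that the $6-o(1)$ in the statement requires. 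For general $\ell$ the paper adds $2\ell-4$ universal vertices to this same planar graph; for $\ell=2$ nothing at all needs to be added.
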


To get the lower bounds in Theorems~\ref{Tlk2},~\ref{T3k2} and~\ref{T2k2}, construct a not $\ell$-linked
but $(2\ell+1)$-connected graph  from a planar, not triangulated $5$-connected graph by adding $2\ell-4$
{\em universal} vertices, which are connected to all other vertices of the graph.
We will finish this section with a small new result.
\begin{theorem}
$$
\delta(k, K^3,N)= \begin{cases}
\left\lceil\frac{N}{2}\right\rceil, & \mbox{ if }k< 2,\\
\left\lceil\frac{N+2}{3}\right\rceil, & \mbox{ if }k= 2,\\
k, & \mbox{ if }k\ge 3.
\end{cases}
$$
\end{theorem}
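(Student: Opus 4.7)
The key observation is that $G$ is $K^3$-linked if and only if every three vertices of $G$ lie on a common cycle: a cycle through $a,b,c$ splits at these vertices into three internally disjoint arcs forming the triangle of paths, and conversely the three paths assemble into such a cycle. For $k\ge 3$, the upper bound follows from Dirac's classical theorem that every three vertices in a $3$-connected graph lie on a common cycle, and the matching lower bound $\delta\ge k$ is built into the definition. For $k\le 1$, $\delta(G)\ge \lceil N/2\rceil$ forces a Hamilton cycle by Dirac's theorem, which in particular contains any three prescribed vertices; the matching lower bound is witnessed by gluing two copies of $K^{(N+1)/2}$ at a single vertex $v$ (for $N$ odd; a slight parity adjustment covers $N$ even), a connected graph with $\delta=\lceil N/2\rceil-1$ in which choosing $c=v$ and $a,b$ in different cliques forces every $a$-to-$b$ path through the endpoint $c$, violating internal disjointness.

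\emph{Lower bound, $k=2$.} Gluing three copies of $K^{(N+4)/3}$ along a common vertex pair $\{x,y\}$ (for $N\equiv 2\pmod 3$; slight size adjustments yield constructions for the other residues) produces a $2$-connected graph with $\delta=\lceil(N+2)/3\rceil-1$ in which three internally disjoint paths between one representative of each clique would all have to cross the $2$-cut $\{x,y\}$, contradicting $|\{x,y\}|=2$.

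\emph{Upper bound, $k=2$.} Let $G$ be $2$-connected with $\delta(G)\ge\lceil(N+2)/3\rceil$ and fix $a,b,c\in V(G)$; we seek a cycle through $a,b,c$. If $G$ is $3$-connected, apply Dirac. Otherwise let $T=\{x,y\}$ be a $2$-cut and $C_1,\ldots,C_\ell$ the components of $G-T$. A vertex in the smallest component has at most $\lfloor(N-2)/\ell\rfloor+1$ neighbors, and a short check by $N\bmod 3$ shows this is strictly less than $\lceil(N+2)/3\rceil$ whenever $\ell\ge 3$, contradicting the degree hypothesis; hence $\ell=2$ with components $V_1,V_2$. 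The augmented blocks $G[V_i\cup T]+xy$ are $2$-connected (any path in $G$ through $V_{3-i}$ is replaced by the edge $xy$), so each contains two internally disjoint $x$-to-$y$ paths by Menger's theorem. One then builds the desired cycle from suitable sub-paths chosen within each block and glued at $T$, according to how $\{a,b,c\}$ distributes among $V_1,V_2,T$. The main obstacle is this final case analysis, elementary in each individual subcase but requiring the degree hypothesis to guarantee enough room inside each block to avoid internal collisions.
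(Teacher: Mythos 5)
Your reduction of $K^3$-linkedness to ``every three vertices lie on a common cycle,'' the cases $k\ge 3$ and $k\le 1$, and all three lower-bound constructions are correct and essentially match the paper. The problem is the upper bound for $k=2$, which is the real content of the theorem. After correctly ruling out $2$-cuts with three or more sides, you defer everything to a ``final case analysis, elementary in each individual subcase,'' but several of those subcases are not elementary and do not follow from $2$-connectivity of the augmented blocks via Menger. If, say, $a,b\in V_1$ and $c\in V_2$, a cycle through $a,b,c$ meeting both sides must split into an $x$--$y$ path with interior in $V_2$ containing $c$ (this part is fine: $2$-connectivity of the block does give an $x$--$y$ path through one prescribed vertex) and an $x$--$y$ path with interior in $V_1$ containing \emph{both} $a$ and $b$. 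The latter is a genuine linkage/ordering problem: it fails in general $2$-connected graphs, and the degree hypothesis you have inside $G[V_1\cup T]$ (roughly half its order, but only for vertices of $V_1$, while $x$ and $y$ may have very few neighbours there) does not hand it to you by any standard theorem. Worse, if all of $a,b,c$ lie in $V_1$, the desired cycle need not cross $T$ at all, so it cannot be ``glued at $T$'' from sub-paths of the two blocks; you would have to recurse into $G[V_1\cup T]$, which inherits neither the connectivity nor the degree hypothesis relative to its own order. As sketched, the decomposition approach does not close, and closing it would amount to re-proving the theorem inside each block.

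The paper avoids all of this with a short direct argument that you may want to adopt: take a longest cycle $C$ through two of the three prescribed vertices; a standard Dirac-type bound in a $2$-connected graph gives $|C|\ge 2\delta\ge \frac{2N+4}{3}$, so the third vertex $z$, if not on $C$, has at least $3$ neighbours on $C$, two of which lie on the same $x$--$y$ arc, and $z$ can be inserted between them. This bypasses any separation analysis and uses the degree bound exactly once, where it is needed.
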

\begin{proof}
Let $\{ x,y,z\}\subseteq V(G)$.
For $k<2$, the statement is trivial, as only $2$-connected graphs can be $K^3$-linked, and we need $\delta(G)\ge \frac{N}{2}$
to guarantee $\kappa(G)\ge 2$, in which case the lower bound for $k=2$ gives the result. For $k=2$, 
let $C$ be a longest cycle in $G$ containing $\{ x,y\}$. 
Then, with a standard Dirac type argument (\cite{D}),
$|C|\ge 2\delta(G)\ge \frac{2N+4}{3}$. If $z$ is on $C$, we are done. Otherwise,
$|N(z)\cap C|\ge 3$, and $z$ has at least two neighbors on at least one of $xCy$ and 
$yCx$, so we can find a cycle through $x$, $y$ and $z$. On the other hand, the graph consisting of
three complete graphs on $\frac{N-2}{3}$ 
(rounded up or down appropriately) vertices, each of them completely connected to two independent vertices, shows the sharpness
of the bound.

For $k\ge 3$, the statement is very easy again, as every $3$-connected graph admits a cycle through any three
given vertices, and is thus $K^3$-linked.
\end{proof}

Note that the only multigraphs with three edges 
we have not considered yet are $P^4$, $K^2\cup P^3$ and $K^2\cup C^2$, 
where $C^2$ denotes two vertices connected by a double edge. We will find good
bounds for these graphs in the following three sections.

\section{$H=P^4$}\label{sp4}

Let us start this section with a definition.
\begin{definition}
Let $G$ be a graph, and let $\{ a,a',b,b',c,c'\} \subseteq V(G)$. Then $(G,\{ b,b'\} ,\{ c,c'\}, (a,a'))$ is
an {\em obstruction} if for any three vertex disjoint paths from $\{ a,b,b'\}$ to $\{ a',c,c'\}$, one path is from $a$ to $a'$.
\end{definition}
Note that if $G$ is a graph which does not contain a path through $a,c,b,a'\in V(G)$ in this order, we can
construct an obstruction $(G_{b,c},\{ b,b'\} ,\{ c,c'\}, (a,a'))$ from $G$ through addition of two vertices $\{ b',c'\}$
with $N(b')=N(b)$ and $N(c')=N(c)$. Thus, if we want to find bounds on $\delta(k,P^4,n)$, it will be helpful to know
the structure of obstructions.

Yu has characterized obstructions in~\cite{Y}.
We will be concerned mostly with connectivity $k\ge 4$, 
so we will state his results here only for $4$-connected graphs. In particular, we omit case (4) in the following definition.
\begin{definition}[\cite{Y}]
Let $G$ be a graph, and $\{ a,b,b'\},\{ a',c,c'\} \subseteq V(G)$. Suppose  $\{ a,b,b'\}\ne\{ a',c,c'\}$,
and assume that $G$ has no proper $3$-separation $(G_1,G_2)$ such that $\{ a,b,b'\} \subseteq G_1$ and
$\{ a,c,c'\} \subseteq G_2$. Then we call
$(G,(a,b,b'),(a',c,c'))$ a {\em rung} if one of the following is satisfied, up to permutation of $\{ b,b'\}$
and $\{ c,c'\}$.

\begin{tabular}{l p{0.65\textwidth} p{0.15\textwidth}}
(1)&
$a=a'$ or $\{ b,b'\} = \{ c,c'\}$; & \vspace{0in}\scalebox{0.2}{\includegraphics{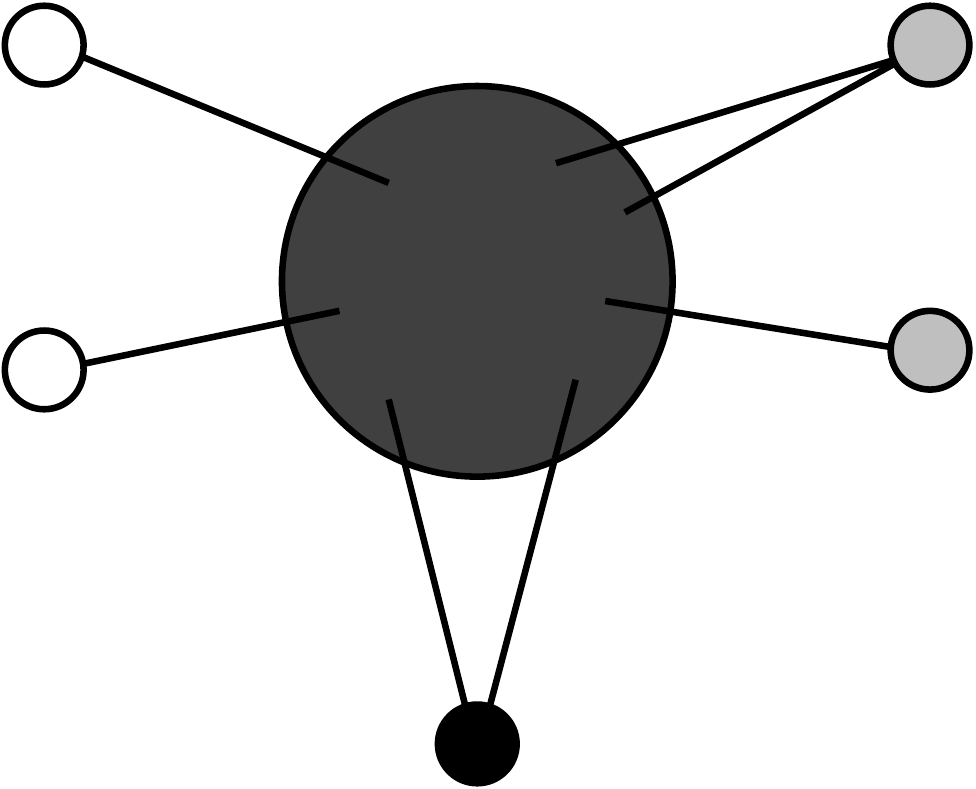}}\\ \hline

(2)&
$b=c$ and $(G-b, b',c',a',a)$ is plane; & \vspace{0in}\scalebox{0.2}{\includegraphics{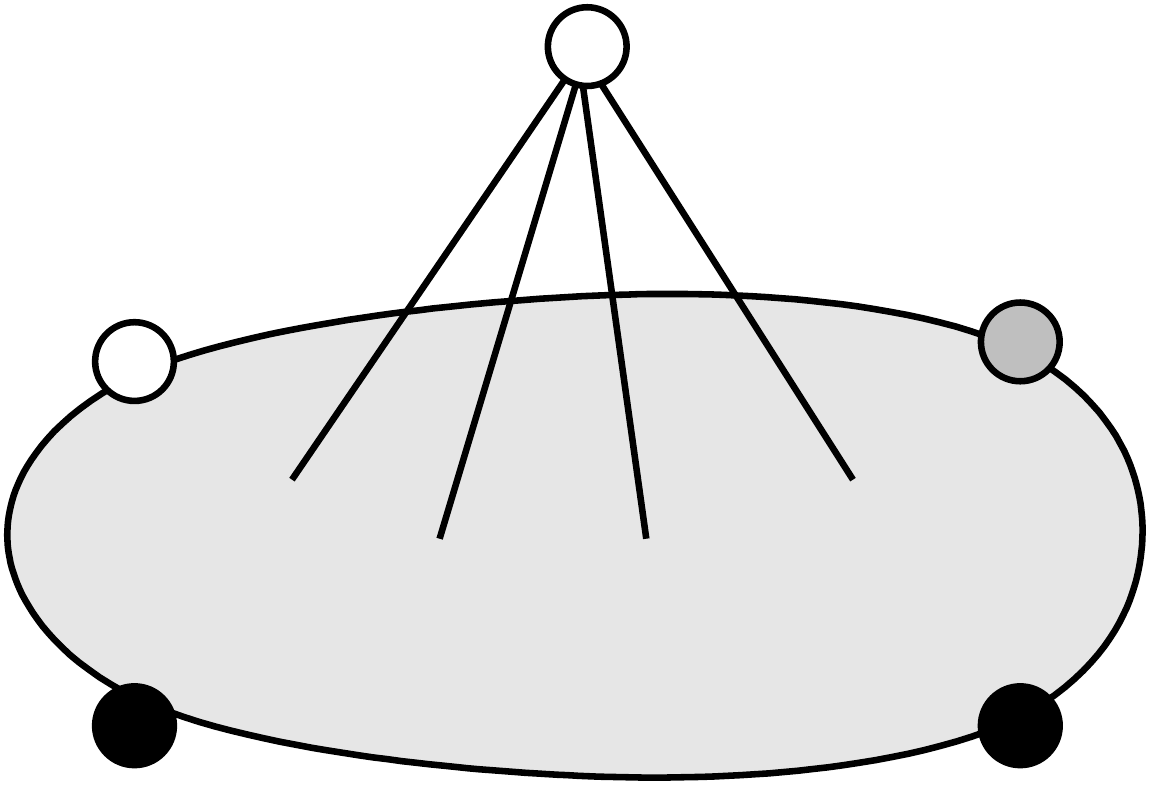}}\\

\hline

(3)&
$\{ a,b,b'\}\cap \{a',c,c'\} =\emptyset$ and $(G,b,a,b',c',a',c)$ is plane; 
& \vspace{0in}\scalebox{0.2}{\includegraphics{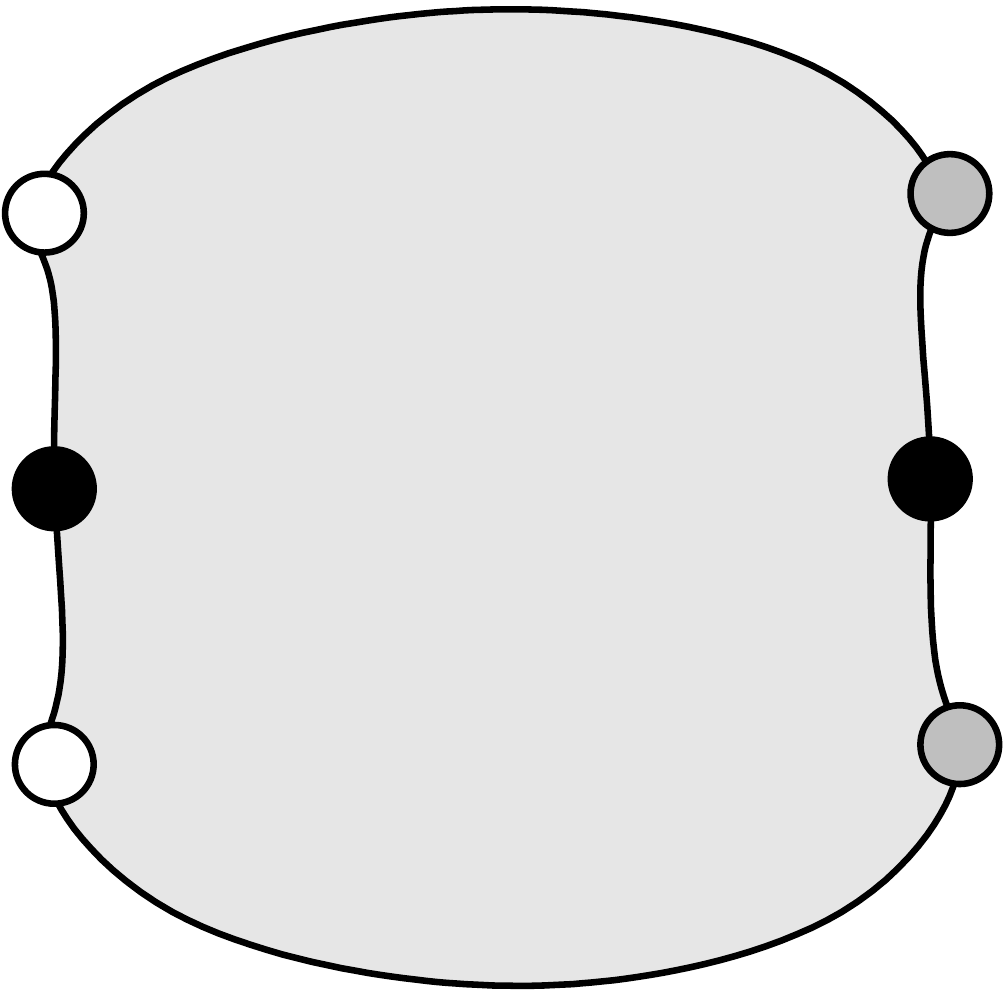}}\\
\hline

(5)&
$\{ a,b,b'\}\cap \{a',c,c'\} =\emptyset$, $(G,b,c,a',a)$ is plane, and $G$ has a separation $(G_1,G_2)$, such that 
$V(G_1\cap G_2)=\{ z,a\}$ (or $V(G_1\cap G_2)=\{ z,a'\}$), $\{ b,c,a,a'\} \subseteq G_1$, $\{b',c'\} \subseteq G_2$,
and $(G_2,b',c',z,a)$ (or $(G_2,b',c',a',z)$) is plane; & \vspace{0in}\scalebox{0.2}{\includegraphics{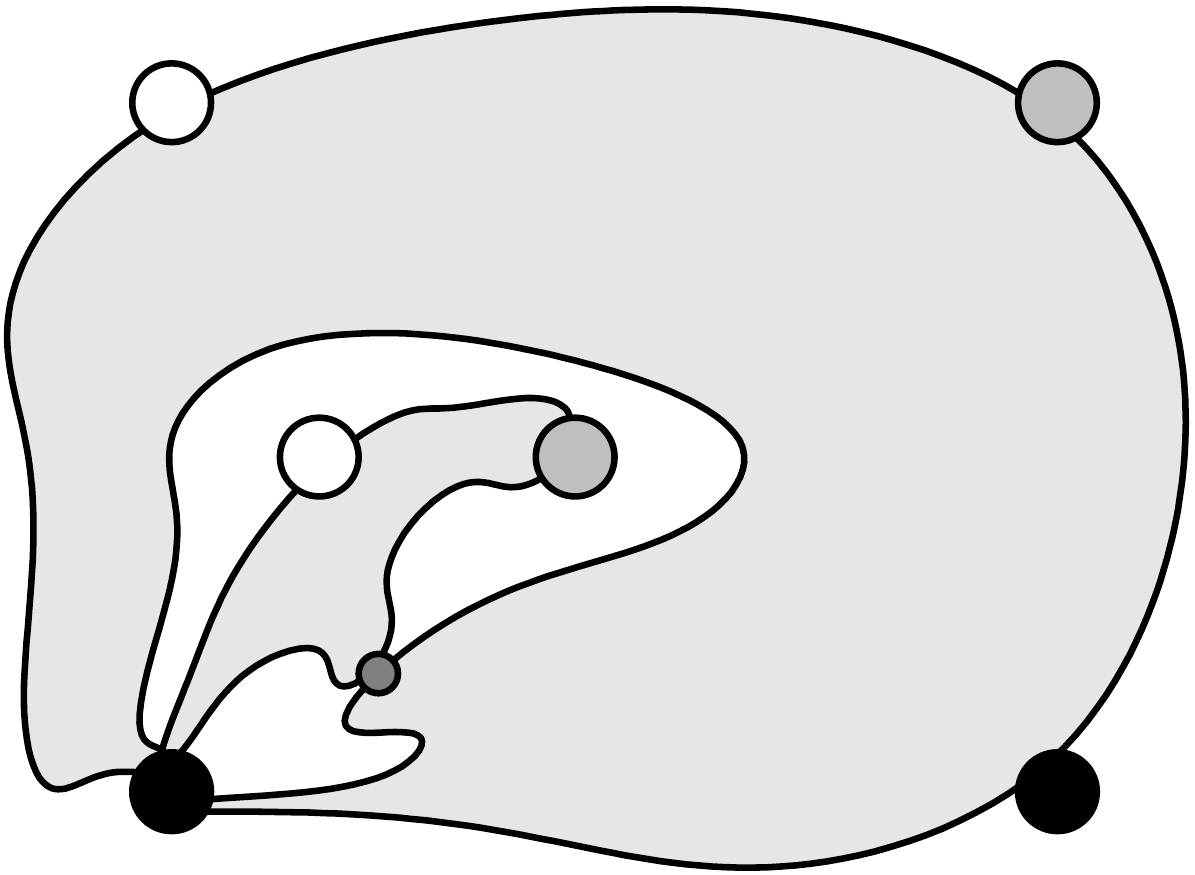}}\\

(6)&
$\{ a,b,b'\}\cap \{a',c,c'\} =\emptyset$, and there are pairwise edge disjoint subgraphs $G_1$, $G_2$ and $M$ of $G$
such that $G=G_1\cup G_2\cup M$, $V(G_1\cap M)= \{ u,w\}$, $V(G_2\cap M)= \{ p,q\}$, $G_1\cap G_2=\emptyset$, 
$\{ a,b,c\}\subseteq G_1$, $\{ a',b',c'\}\subseteq G_2$, and $(G_1,a,b,c,w,u)$ and $(G_2,a',c',b',p,q)$ are plane; 
& \vspace{0in}\scalebox{0.2}{\includegraphics{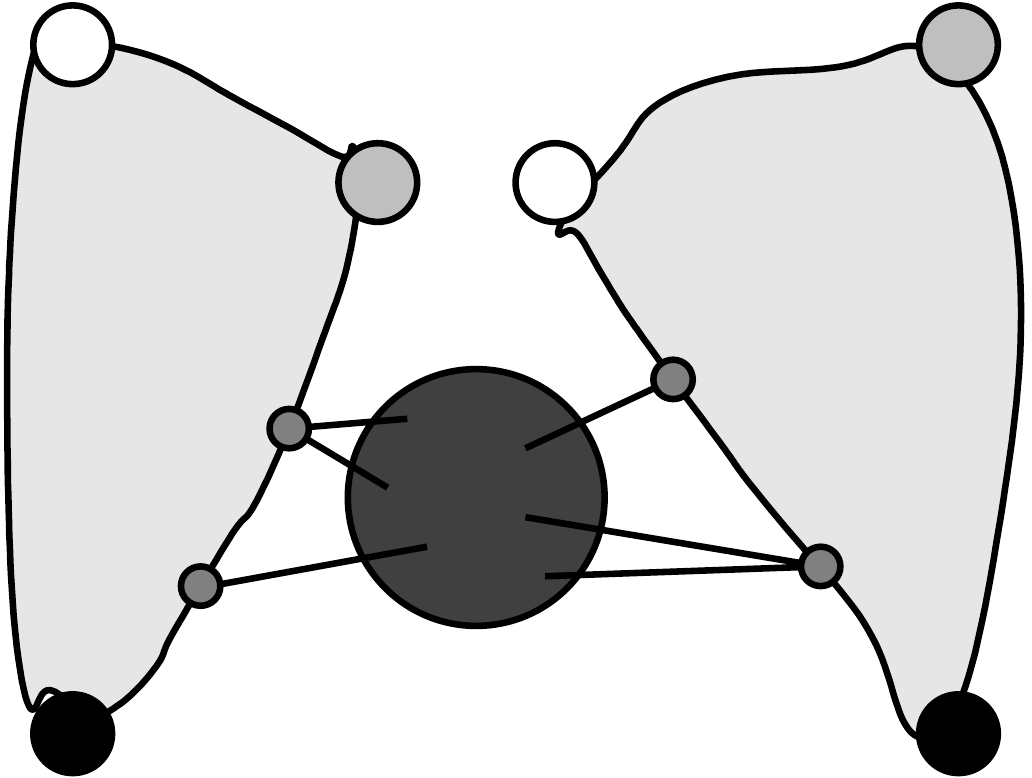}} \\
\hline

(7)&
$\{ a,b,b'\}\cap \{a',c,c'\} =\emptyset$, and there are pairwise edge disjoint subgraphs $G_1$, $G_2$ and $M$ of $G$
such that $G=G_1\cup G_2\cup M$,  $V(G_1\cap M)= \{ a,a',w\}$, $V(G_2\cap M)= \{ a,a',p\}$, $V(G_1\cap G_2)=\{ a,a'\}$, 
$\{ b,c\}\subseteq G_1$, $\{ b',c'\}\subseteq G_2$, and $(G_1,b,c,a',w,a)$ and $(G_2,c',b',a,p,a)$ are plane;
& \vspace{0in}\scalebox{0.2}{\includegraphics{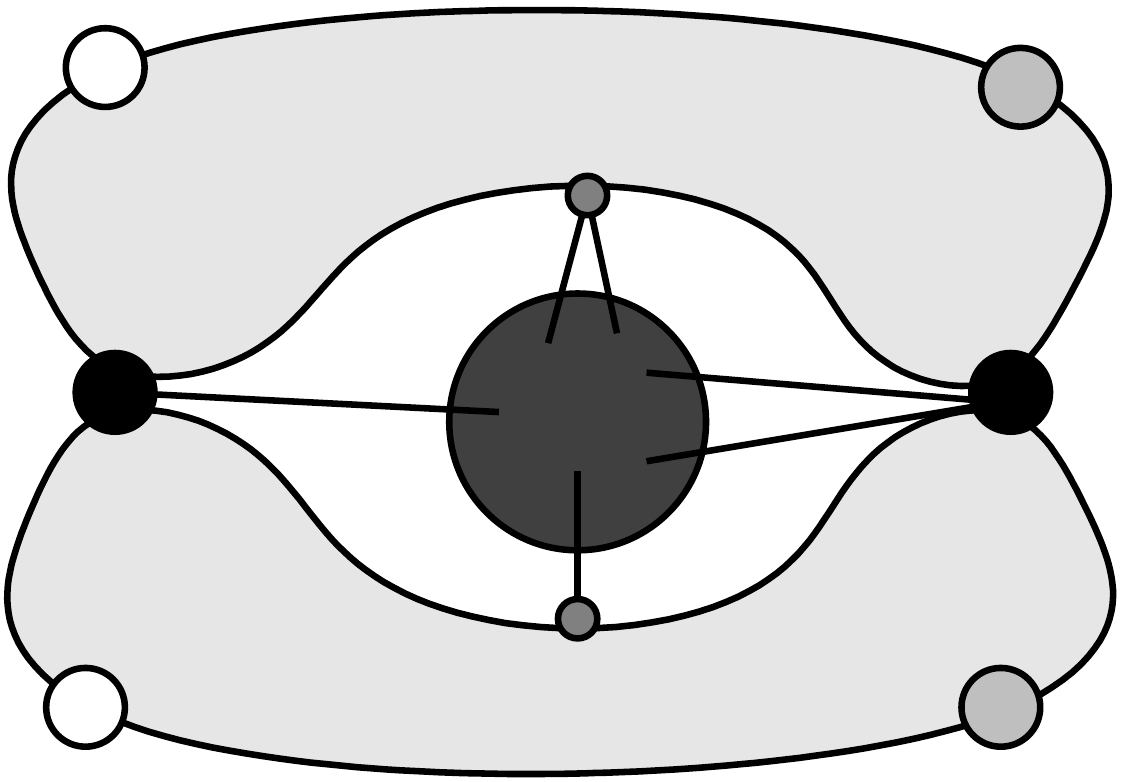}}\\
\end{tabular}

\end{definition}

\begin{definition}[\cite{Y}]
Let $L$ be a graph and let $R_1,\ldots ,R_m$ be edge disjoint subgraphs of $L$ such that
\begin{enumerate}
\item 
$(R_i,(v_{i-1},x_{i-1},y_{i-1}),(v_i,x_i,y_i))$ is a rung for $1\le i\le m$,
\item 
$V(R_i\cap R_j)=\{ v_i,x_i,y_i\} \cap \{ v_{j-1},x_{j-1},y_{j-1}\}$ for $1\le i<j\le m$,
\item
for $0\le i\le k\le j\le m$, we have $(v_i=v_j\implies v_i=v_k)$, $(x_i=x_j\implies x_i=x_k)$
and $(y_i=y_j\implies y_i=y_k)$,
\item
$L=(\bigcup_{i=1}^m R_i)+S$, where $S\subseteq \bigcup_{i=0}^m \{ v_ix_i,v_iy_i,x_iy_i\}$.
\end{enumerate}
Then we call $L$ a {\em ladder along $v_0\ldots v_m$}.
\end{definition}
Due to condition 4 in the last definition, we may assume that there are no edges in 
$R_i[v_{i-1},x_{i-1},y_{i-1}]$ and $R_i[v_i,x_i,y_i]$ for $1\le i\le m$.

For a sequence $S$, the {\em reduced sequence of $S$} is the sequence obtained from $S$ by removing a minimal number 
of elements such that consecutive elements differ. After these definitions, we are ready to formulate the 
version of the characterization theorem for obstructions, restricted to $4$-connected graphs.
\begin{theorem}[\cite{Y}]\label{char}
Let $G$ be a $4$-connected graph, $\{ a,b,b'\},\{ a',c,c'\}\subseteq V(G)$. Then the following are equivalent.
\begin{enumerate}
\item
$(G,\{ b,c\} ,\{ b',c'\} ,(a,a'))$ is an obstruction,
\item
$G$ has a separation $(J,L)$ such that $V(J\cap L)=\{ w_0,\ldots,w_n\}$, $(J,w_0,\ldots,w_n)$ is plane,
and $(L,(a,b,b'),(a',c,c'))$ is a ladder along $v_0\ldots v_m$, where $v_0=a$, $v_m=a'$ and $w_0\ldots w_n$
is the reduced sequence of $v_0\ldots v_m$.
\end{enumerate}
\end{theorem}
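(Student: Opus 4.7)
My plan is to prove the two implications separately, as is standard for such structure theorems. The reverse direction (2)$\Rightarrow$(1) is the easier one: given a ladder $L$ glued to a plane piece $J$ along the reduced sequence $w_0,\ldots,w_n$, I would fix any three vertex-disjoint paths $P_1,P_2,P_3$ from $\{a,b,b'\}$ to $\{a',c,c'\}$ in $G$ and show that one of them must run from $a$ to $a'$. Walking along the spine $v_0\ldots v_m$, I would argue by induction on $m$ that at each rung $R_i$ the cyclic order of the three intersections of the paths with $\{v_i,x_i,y_i\}$ is forced by the plane embedding of $R_i$, so the ``diagonal'' role carried by $v_i$ is transported from one rung to the next. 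The plane part $J$ cannot scramble the three rails either, since its outer-face cyclic order matches the reduced sequence. Each of the rung types (1)--(3) and (5)--(7) is checked individually, but these reduce to elementary planar routing arguments.

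The forward direction (1)$\Rightarrow$(2) is the substantive one, and I would attack it by induction on $|V(G)|$, imitating the Robertson--Seymour/Shiloach approach to 2-linkage obstructions. Starting from a 4-connected obstruction, look at a maximal plane piece $J\subseteq G$ containing the terminal $a$ and bounded by a cut $T=\{w_0,\ldots,w_n\}$ whose cyclic order on the outer face of $J$ is fixed. Since $G$ is 4-connected and no three disjoint $\{a,b,b'\}$--$\{a',c,c'\}$ paths avoid an $a$--$a'$ path, one shows $T$ must actually be a small cut (size $3$ in most configurations) and that $L:=G-(J\setminus T)$, with terminals relocated onto $T$ in the correct cyclic order, is itself a smaller obstruction. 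Apply induction to obtain a ladder decomposition of $L$ along a spine $v_0\ldots v_m$, and reglue $J$ along $T$ in the cyclic order that matches the reduced sequence of $v_0\ldots v_m$. To seed the induction, the case of an obstruction with no nontrivial plane piece forces $G$ itself to be a single rung, so the structural lemma that every ``atomic'' 4-connected obstruction falls into one of the seven rung types is the true base.

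The main obstacle is precisely this classification of atomic obstructions and the bookkeeping that keeps conditions (2)--(4) of the ladder definition satisfied under the inductive step. The case split is driven by the possible 3-separations $(G_1,G_2)$ with $\{a,b,b'\}\subseteq G_1$: depending on which terminals collapse onto the separator and whether $G_2$ admits a further separation of the shape appearing in (5)--(7), one reads off the rung type. Degenerate cases where two of $a,b,b',a',c,c'$ coincide produce the rung of type~(1) or (2). Maintaining a consistent orientation of the plane pieces $J$ and of each rung, so that the cyclic order on the outer face of $J$ glues correctly to the reduced sequence of the spine throughout the induction, is the principal technical burden -- and is essentially why Yu's catalog of rungs is as elaborate as it is.
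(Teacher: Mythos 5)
This statement is not proved in the paper at all: it is Yu's characterization theorem, imported verbatim from \cite{Y} (restricted to $4$-connected graphs, which is why case (4) of the rung definition is omitted), so there is no in-paper argument to compare yours against. Judged on its own terms, your proposal is a plausible table of contents for such a proof, but it is not a proof: every step that carries actual mathematical weight is deferred. In the direction (2)$\Rightarrow$(1) the claim that ``the plane part $J$ cannot scramble the three rails'' is precisely the point that needs a Jordan-curve/3-planarity argument --- the three paths may enter and leave $J$ repeatedly through non-consecutive $w_i$'s, and showing that the cyclic order on the outer face of $J$ together with the rung structure forces one path to be an $a$--$a'$ path is the whole content of that implication. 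Likewise ``each of the rung types is checked individually, but these reduce to elementary planar routing arguments'' asserts the conclusion rather than deriving it.

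The forward direction has a concrete structural problem in addition to being unexecuted. You propose to peel off a maximal plane piece $J$ containing $a$, relocate the terminals onto its boundary $T$, and induct on the remainder. But in the target decomposition $a=v_0=w_0$ and $a'=v_m$ are simultaneously spine endpoints of $L$ and boundary vertices of $J$: the plane piece $J$ is glued along the \emph{entire} reduced spine $w_0\ldots w_n$, not localized near $a$, so removing ``a plane piece around $a$'' does not produce a smaller instance of the same shape, and it is not clear the relocated configuration is again an obstruction or that the terminals $b,b',c,c'$ survive the surgery. You also never say where $4$-connectivity enters (it is what excludes the degenerate rung type (4) and controls the possible small separations), and you yourself identify the classification of ``atomic'' obstructions into the rung types (1)--(3),(5)--(7) as the true base case and main obstacle --- which is to say the proposal stops exactly where the proof must begin. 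Yu's actual argument occupies a series of long papers; a sketch at this level of resolution cannot be credited as a proof of the theorem.
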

With the help of Theorem~\ref{char}, we can determine fairly sharp bounds for $\delta(k,P^4,N)$.

\begin{theorem}\label{thmp4}
$$
\begin{array}{rcccl}
&&\delta(k,P^4,N)&=&\left\lceil \frac{N+1}{2}\right\rceil ,\mbox{ if }k\le 3 \mbox{ and }N\ge 14,\\
\sqrt{N+1}&\le & \delta(4,P^4,N) &\le &\sqrt{N}+5,\\
\sqrt[3]{N}+2.7&\le & \delta(5,P^4,N) &\le &\sqrt[3]{N}+4.2+o(1)\le\sqrt[3]{N}+6,\\
6&\le & \delta(6,P^4,N) &\le &8,\\
&&\delta(6,P^4,N) &= &8,\mbox{ if }N\ge 418\\
&& \delta(k,P^4,N) &= &k,\mbox{ if }k\ge 7.
\end{array}
$$
\end{theorem}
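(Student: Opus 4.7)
For $k\le 3$ my plan is a Dirac-type rotation argument: a minimum degree exceeding $N/2$ makes $G$ Hamilton-connected and, given $N\ge 14$, leaves enough flexibility to arrange a Hamilton path visiting any four prescribed vertices in any prescribed order. The matching lower bound is realised by two copies of $K^{\lceil(N+1)/2\rceil}$ glued along a single vertex: placing $a,a'$ in distinct copies and $b,c$ on the shared vertex blocks every admissible $P^4$.

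For $k\ge 4$ I would first convert the problem into one about obstructions. If $G$ fails to be $P^4$-linked, there exist $a,c,b,a'\in V(G)$ admitting no path visiting them in this order; doubling $b$ and $c$ to pairs $\{b,b'\},\{c,c'\}$ with identical neighbourhoods produces a $k$-connected graph $G_{b,c}$ of the same minimum degree in which $(G_{b,c},\{b,b'\},\{c,c'\},(a,a'))$ is an obstruction. Theorem~\ref{char} then supplies a separation $(J,L)$ with $J$ plane on outer boundary $w_0,\dots,w_n$ and $L$ a ladder along $v_0\dots v_m$. The key observation is that every interior vertex of $J$ retains all its $G$-neighbours inside $J$, so its degree in $J$ is at least $\delta(G)$; combined with the planar near-triangulation bound $\|J\|\le 3|J|-n-4$ this yields
\[
(|J|-n-1)\,\delta(G)\;\le\;2\|J\|\;\le\;6|J|-2n-8,
\]
which forces $|J|\le n+1$ once $\delta(G)\ge 6$. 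So the planar part collapses to its boundary and essentially all of $G$ lives inside the ladder.

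Repeating the same planar counting inside each rung $R_i$ (the seven types in Theorem~\ref{char} are either trivially small or plane with short outer face) bounds $|R_i|$ by a function of $\delta(G)$, while $k$-connectivity of $G$ restricts both the number of rungs and the repetition pattern of the triples $(v_i,x_i,y_i)$. The two constraints compound: for $k=4$ a single wide rung can still carry up to $O(\delta^2)$ vertices, giving $\delta(G)\ge\sqrt{N}+O(1)$; for $k=5$ the extra connectivity forces a two-rung cascade with $N=O(\delta^3)$, giving $\delta(G)\ge\sqrt[3]{N}+O(1)$; for $k=6$ the ladder can contribute only a bounded amount beyond the boundary, yielding $\delta(6,P^4,N)\le 8$ for every $N$, while the matching lower bound requires a specific construction of size at least $418$. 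For $k\ge 7$ any obstruction would produce a separation of size at most~$6$ in $G$, contradicting $k$-connectivity, so no obstruction exists and $\delta(k,P^4,N)=k$. The matching lower bounds for $k=4,5$ are realised by explicit obstructions: a single type-(3) rung with a long triangulated interior, respectively two such rungs stacked.

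The main technical obstacle is the case-by-case structural analysis of the ladder under both the $k$-connectivity and the degree hypotheses. Each of the seven rung types in Theorem~\ref{char} admits a slightly different gluing to its neighbours, and the interaction with the planar part $J$ (by then collapsed to its boundary) has to be tracked carefully to isolate the small additive constants ($+5$, $+6$, $8$) and the threshold $N\ge 418$. Once this structural picture is in place, the remaining estimates are essentially planar-graph arithmetic.
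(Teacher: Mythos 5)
Your reduction to obstructions via doubling $b$ and $c$ and invoking Theorem~\ref{char} is exactly the paper's starting point, but the quantitative core of your argument runs in the wrong direction. To prove $\delta(4,P^4,N)\le\sqrt N+5$ one must show that every $4$-connected obstruction with minimum degree $\delta$ is \emph{large}, namely $N>(\delta-5)^2$; your plan instead bounds the pieces from above ($J$ ``collapses to its boundary'', each rung has $O(\delta^2)$ vertices, few rungs), which would give $N=O(\delta^2)$ --- an upper bound on $N$, which proves nothing of the kind (read contrapositively it would say that \emph{small} minimum degree forces linkedness). The actual mechanism is the opposite: the end rungs are of type (1), so $a$ and $a'$ have almost all their neighbours in $J$, forcing $|V(J)|\ge 2\delta-3$; planarity of $J$ then forces at least $\delta|V(J)|-(6|V(J)|-12)\approx(\delta-6)\cdot 2\delta$ edges to leave $J$ for $L$; and the ladder structure lets each vertex of $L$ absorb only boundedly many such edges, so $|V(L)|\gtrsim\delta^2$ (and $\gtrsim\delta^3$ for $k=5$, where one first shows every rung is a $6$-vertex gadget and that $N(a)\cup N(a')$ together with the $v_i$ near the ends trace out a long induced path in the plane graph, giving $|V(J)|\gtrsim 4\delta^2$). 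Also, your displayed inequality only yields $|J|\le(n+1)\delta/(\delta-6)$, not $|J|\le n+1$, and for $\delta=6$ it is vacuous.

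The constructions are also not salvageable as described. A ``type-(3) rung with a long triangulated interior'' is planar and therefore contains a vertex of degree at most $5$; you cannot realize minimum degree $\approx\sqrt N$ (let alone $\approx\sqrt[3]{N}$) inside a plane rung. The paper's extremal graphs are chains of $\delta-1$ cliques $K^{\delta+1}$ threaded onto the ladder rails (and a two-level version of this for $k=5$). Similarly, for $k\le 3$ your two cliques glued at a single vertex form a graph that is only $1$-connected (so it says nothing about $k=2,3$) and your terminal placement puts $b$ and $c$ on the same vertex, which is not an injective $\tau$; the paper glues along \emph{two} vertices and adds one crossing edge $p_1p_4$ to get a $3$-connected example. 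Finally, for $k\ge 7$ the assertion that an obstruction ``would produce a separation of size at most $6$'' is precisely what has to be proved: the separation $(J,L)$ supplied by Theorem~\ref{char} has order $n+1$, which is unbounded, and the paper needs a genuine argument (every $v_i$ or its neighbour has $\ge 4$ neighbours in $J$, then an end-block of $J\setminus C$ violates Euler's formula) to rule out $7$-connected obstructions.
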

\begin{proof}
\begin{case}
$k\le 3$
\end{case}
If $G$ has minimum degree $\delta(G)\ge \frac{N+1}{2}$ and a $3$-separation $(A,B)$, 
then $G[A]$ and $G[B]$ can have missing edges only inside 
$A\cap B$.  In this case, it is easy to check that $G$ is $P^4$-linked for $N\ge 6$. If $G$ has no $3$-separation, then $G$ is $4$-connected and the result follows from the case $k=4$ for $N\ge 14$.

To show that
$\delta(k,P^4,N)> \lfloor\frac{N}{2}\rfloor$ consider a graph $G$
consisting of two complete graphs $G_1$ and $G_2$ with $|G_1|=\lceil\frac{N+2}{2}\rceil$, $|G_2|=\lfloor\frac{N+2}{2}\rfloor$,
and $|G_1\cap G_2|=2$, and an additional edge $p_1p_4$ with $p_1\in V(G_1\setminus G_2)$ and $p_4\in V(G_2\setminus G_1)$.
If we choose $p_3\in V(G_1\setminus G_2)$ and $p_2\in V(G_2\setminus G_1)$, then $G$ contains no path passing through 
$p_1,p_2,p_3,p_4$ in order. 
\begin{case}\label{c2}
$k=4$
\end{case}
First, we will construct a graph $G$ demonstrating that $\delta(4,P^4,N)\ge \sqrt{N+1}$. 
Let $\delta\ge 4$. Let $Z_i$, $1\le i\le \delta -1$  be complete graphs with $|V(Z_i)|=\delta+1$. Let
$\{ a_i,b_i,x_i,y_i\}\subset V(Z_i)$, where $a_1=b_2$,
and otherwise the $V(Z_i)$ are disjoint. Let $V(G)=\{ p_1,p_4\}\cup \bigcup V(Z_i)$, and add all edges
$a_ib_{i+1},~x_ix_{i+1},~y_iy_{i+1}$ for $1\le i\le \delta-2$. Further, add the edge $p_1p_4$ and edges from
$p_1$ to the first $\delta-1$ vertices of the path $P=b_1b_2a_2b_3\ldots b_{\delta-1}a_{\delta-1}$, and from 
$p_4$ to the last $\delta-1$ vertices of $P$ (see Figure~\ref{p4_4}).
\begin{figure}[h]\label{p4_4}
\begin{center}
\scalebox{0.4}{\includegraphics{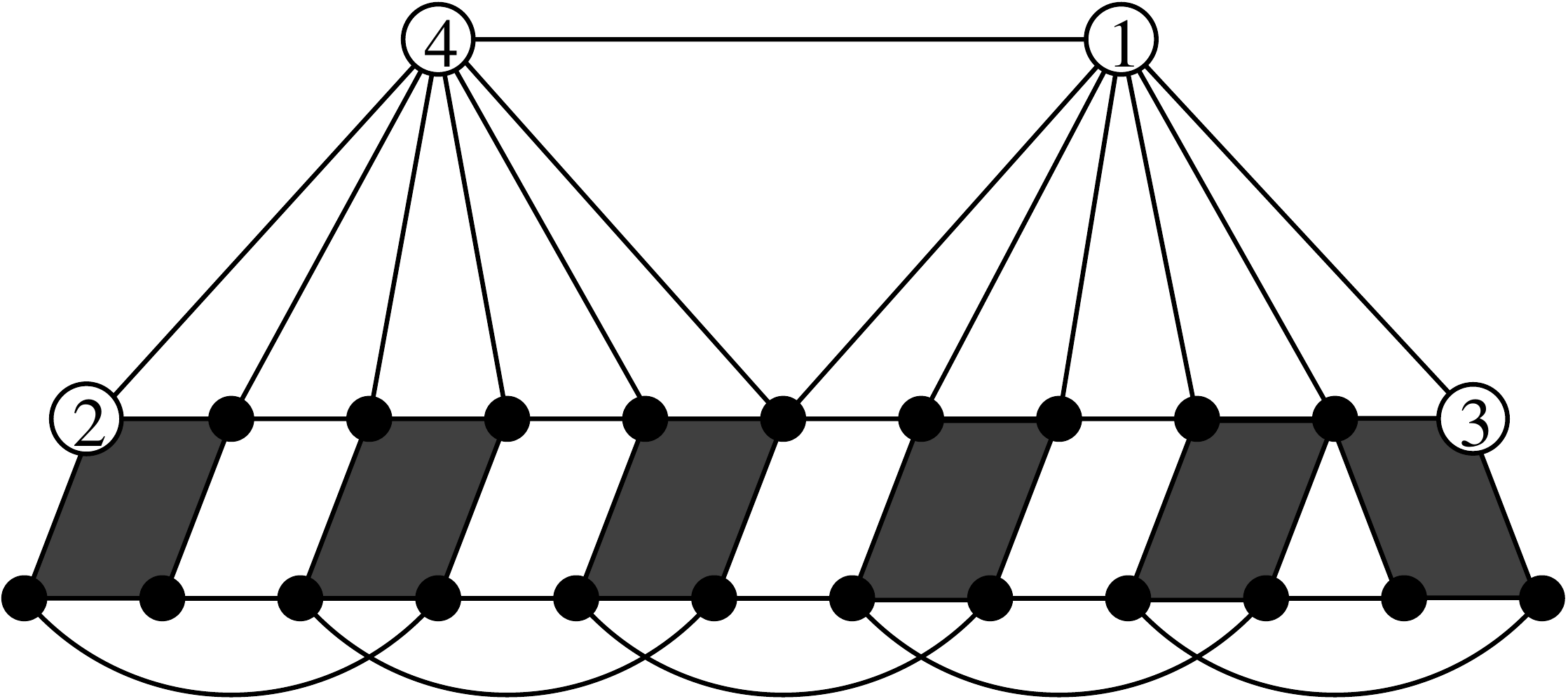}}
\end{center}
\caption{The graph $G$ in Case~\ref{c2}} 
\end{figure}
Then $\delta(G)=\delta$, $G$ is $4$-connected, and $N=|V(G)|=\delta^2$. Further, there is no path containing 
$p_1,~a_{\delta-1},~b_1,~p_4$ in order.

To show that $\delta(4,P^4,N)<\sqrt{N}+5$, assume that $G$ is a $4$-connected graph on $n$ vertices with minimum degree 
$\delta \ge 7$ (the statement is trivial for $\delta\le 6$),
and assume that $G$ contains vertices $a,~c,~b,~a'$, but no path contains the vertices in the given order.
Then $(G_{b,c},\{ b,b'\} ,\{ c,c'\} ,(a,a'))$ is an obstruction and has the structure described in Theorem~\ref{char}.
Let us focus on the structure of the first rung $R_1\subseteq L\subset G_{b,c}$. Since $N(b)=N(b')$, types
(3), (5), (5'), (6), (6'), and (7) are not possible. If $R_1$ is of type (2) or (2'), then it is in fact also of type (1)
by the same reasoning. 
Since $G$ is $4$-connected (and $G$ is obtained from $G_{b,c}$
by contracting $\{ b,b'\}$ and $\{ c,c'\}$), we can conclude that $R_1$ is of type (1) with 
$\{ b,b'\} =\{ x_1,y_1\}$ and $V(R_1)=\{ a,b,b',v_1\}$.

Similarly, $R_m$ is of type (1) with $V(R_m)=\{ a',c,c',v_{m-1}\}$. Thus, $a$ and $a'$ each have no neighbors outside 
of $J\cup \{ b,b',c,c'\}$ (and thus, each of them has at least $\delta-1$ neighbors in $J$). 
We may assume that $aa'\in E(J)$, otherwise we can add it. 
Due to Euler's formula, all triangles in $J$ are facial,  so 
\begin{equation*}
|V(J)|\ge |N(a)\cup N(a')| \ge 2(\delta-1)-|N(a)\cap N(a')|\ge 2\delta-3.
\end{equation*}
Therefore, again with Euler's formula, $J$ has a lot of outgoing edges.
\begin{equation}\label{e1}
|E(J,G\setminus J)|\ge \delta|V(J)|-(6|V(J)|-12).
\end{equation}
For every $1\le j\le m-2$, there are at most two vertices in $V(L)\cap N(v_j)\cap N(v_{j+1})\cap N(v_{j+2})$
due to the ladder structure of $L$. Every other vertex in $V(L)$ has at most two neighbors in $V(J)$.
Noting that $|V(J)|>n$ and $|V(L\cap J)|=n+1$, we have
$$
2|V(L)|\ge |E(J,G\setminus J)|-2n+2\ge \delta|V(J)|-8|V(J)|+14\ge 2\delta^2 -19\delta +38,
$$
and thus $N> (\delta -5)^2$. This shows the claim for $k=4$.
\begin{case}\label{c3}
$k=5$
\end{case}
Construct the graph $G$ as follows (see Figure~\ref{p4_5}). Let 
\begin{eqnarray*}
P^1&=&p_1p_2p_3p_4,\\ 
P^2&=&r_1^1\ldots (r_1^{\delta-3}=r_2^1)r_2^2\ldots (r_2^{\delta-2}=r_3^1)r_3^2\ldots (r_3^{\delta-2}=r_4^1)r_4^2\ldots r_4^{\delta-3},\\
P^3&=&v_{1,1}^1\ldots (v_{1,1}^{\delta-4}=v_{1,2}^1)v_{1,2}^2\ldots (v_{1,2}^{\delta-3}=v_{1,3}^{1})\ldots v_{4,\delta-3}^{\delta-4}
\end{eqnarray*}
be paths,
let $Z_{1,1}^1,\ldots ,~(Z_{1,1}^{\delta-3}=Z_{1,2}^1),\ldots ,~Z_{4,\delta-2}^{\delta-3}$ 
be complete graphs on $\delta+1$ vertices each with 
$v_{i,j}^k,w_{i,j}^k,x_{i,j}^k,y_{i,j}^k,z_{i,j}^k\in V(Z_{i,j}^k)$ and $\{ w_{i,j}^k,x_{i,j}^k\} =\{ y_{i,j}^{k+1},z_{i,j}^{k+1}\} $
for  $1\le i\le 4$, $1\le j\le \delta-2$, and $1\le k\le \delta-3$. 
Add edges $p_ir_i^j$, $r_i^jv_{i,j}^k$, $p_1y_{1,1}^1,~p_1^1z_{1,1}^1,~r_1^1y_{1,1}^1,~r_1^1z_{1,1}^1$, 
$p_4w_{4,\delta-3}^{\delta-4}$, $p_4x_{4,\delta-3}^{\delta-4}$, $r_4^{\delta-3}w_{4,\delta-3}^{\delta-4}$, 
and $r_4^{\delta-3}x_{4,\delta-3}^{\delta-4}$.
\begin{figure}[h]\label{p4_5}
\begin{center}
\scalebox{0.5}{\includegraphics{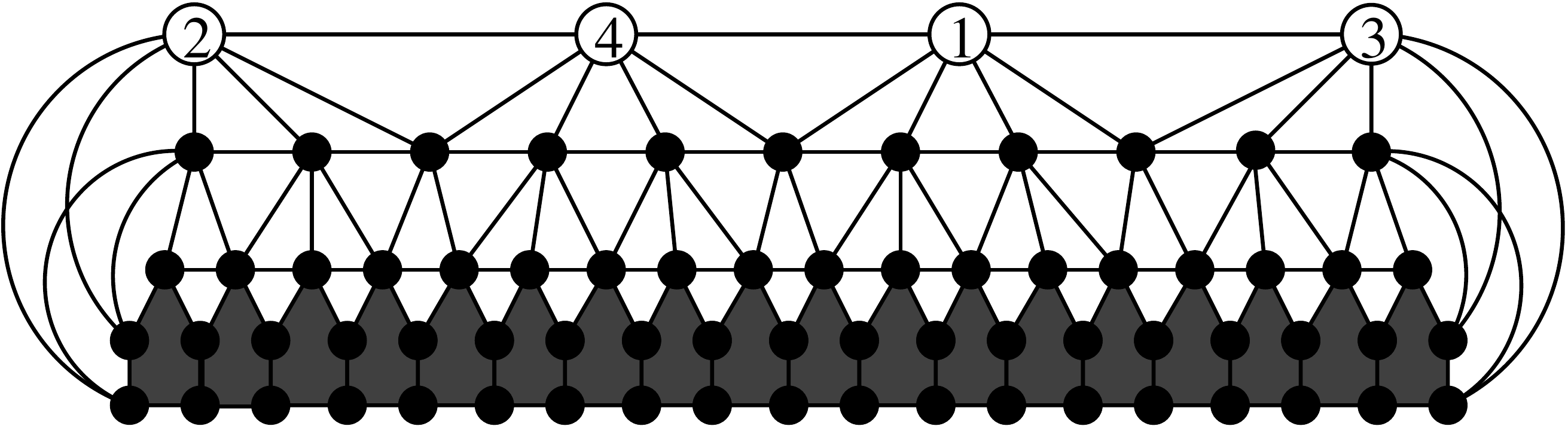}}
\end{center}
\caption{The graph $G$ in Case~\ref{c3}} 
\end{figure}

Then $\delta(G)=\delta$, $G$ is $5$-connected, and $N=|V(G)|=4\delta^3-33\delta^2+84\delta-58<4(\delta-2.7)^3$. 
Further, there is no path containing 
$p_3,~p_1,~p_4,~p_2$ in order. Therefore,  $\delta(5,P^4,N)>\sqrt[3]{\frac{N}{4}}+2.7$.

To show the upper bound for $\delta(5,P^4,N)$, assume that $G$ is a $5$-connected graph on $N$ vertices with minimum degree 
$\delta \ge 7$ (the statement is trivial for $\delta\le 6$),
and assume that $G$ contains vertices $a,~c,~b,~a'$, but no path contains the vertices in the given order.
Then $(G_{b,c},\{ b,b'\} ,\{ c,c'\} ,(a,a'))$ is an obstruction and has the structure described in Theorem~\ref{char}.
By the same argument as in Case~\ref{c2},
we can conclude that the first rung $R_1\subseteq L\subset G_{b,c}$ is of type (1) with 
$\{ b,b'\} =\{ x_1,y_1\}$ and $V(R_1)=\{ a,b,b',v_1\}$.

But since $d(b)\ge \delta$ and $G$ (and thus $G_{b,c}$) is $5$-connected, we have in fact that for $1\le i\le \delta-3$, 
$V(R_i)=\{ v_{i-1},v_i,b,b'\}$. Similarly, $V(R_i)=\{ v_{i-1},v_i,c,c'\}$ for $m-\delta+4\le i\le m$. Further, for $1\le i\le m$,
$R_i$ is either of type 
(1) with $v_{i-1}=v_i$ or $V(R_i)=\{ v_{i-1},x_{i-1},y_{i-1}, v_{i},x_{i},y_{i}\}$. Otherwise,
we could find a $4$-cut, or there would be a contradiction to Euler's formula in one of the planar subgraphs inside $R_i$.

Next, we will look at $N(a)\cap L$. If $av_i\in E(G_{b,c})$ for some $2\le i\le m-1$, then $\{ a,v_i,x_i,y_i\}$ is a $4$-cut
of $G_{b,c}$, so $N(a)\cap L\subseteq \{ v_1,v_m,b,b'\}$. Similarly,  $N(a')\cap L\subseteq \{ v_0,v_{m-1},c,c'\}$.
Therefore, 
$$
|(N(a)\cup N(a'))\setminus L|\ge 2(\delta-3)-1=2\delta-7.
$$
Observe that $V(J)\cup \{ b,c\}$ induce a planar graph.
The vertices in 
$$(N(a)\cup N(a'))\setminus L\cup\{ v_1,\ldots v_{\delta-3},v_{m-\delta+4},\ldots v_{m-1}\}$$ 
induce a subgraph of a path, as otherwise $G[V(J)\cup \{ b,c\}]$ would contain a separating
cycle of length at most $6$, which would lead to a contradiction with Euler's formula.
This implies that 
$$|V(J)|\ge (4\delta-10)(\delta-3)+4=4\delta^2-22\delta +34.$$
For every $1\le j\le m-1$, if $v_j\ne v_{j+1}$, then $V(L)\cap N(v_j)\cap N(v_{j+1})\subseteq \{ x_j,y_j\}$
due to the ladder structure of $L$. Every other vertex in $V(L)$ has at most one neighbor in $V(J)$.
Noting that $|V(J)|\ge m+2\delta-7$ and $|V(L\cap J)|=n+1$, we have (using~(\ref{e1}))
$$
|V(G)|\ge |E(J,G\setminus J)|-2(m-1)+|V(J)|\ge \delta|V(J)|-7|V(J)|+4\delta 
>4(\delta-6)^3,
$$
and thus $\delta(5,P^4,N)< \sqrt[3]{\frac{N}{4}}+6$. The last inequality also gives us
$|V(L)|>4(\delta-4.2)^3$ for $\delta\ge 50$, so $\delta(5,P^4,N)< \sqrt[3]{\frac{N}{4}}+4.2+o(1)$.

\begin{case}\label{c4}
$k= 6$ 
\end{case}
It follows from Theorem~\ref{klinked} that $\delta(6,P^4,N)\le 10$, but we can do a little better.
First, we will construct a $6$-connected graph $G$ with $\delta(G)=7$, which is not $P^4$-linked.
This is a graph very similar to a graph constructed by Yu in~\cite{Y}, although there he falsely claims that this
graph is $7$-connected.

Choose $n$ large enough to be able to 
construct a $3$-connected plane graph $(J,w_0,\ldots,w_n)$
along the lines of the construction in Case~\ref{c3}. Add a ladder $L$ along
$J\cap L=w_0w_1\ldots w_n$ as follows. Add vertices
$x$, $y$, and $x_i$, $y_i$ for $6\le i\le n-5$, and edges $xw_j,~yw_{n-j}$ for $0\le j\le 4$, 
$xx_6$, $xy_6$, $w_4x_6$, $w_4y_6$, $w_5x_6$, $w_5y_6$, $yx_{n-5}$, $yy_{n-5}$, $w_{n-4}x_{n-5}$,
$w_{n-4}y_{n-5}$, $w_{n-5}x_{n-5}$, $w_{n-5}y_{n-5}$, 
and all possible edges in $ \{ w_i,x_{i},y_{i},x_{i+1},y_{i+1}\}$ for $6\le i\le n-6$ (see Figure~\ref{p46}).

If we construct $J$ carefully, then $G$ is $6$-connected and has $\delta(G)=7$. 
But  there is no path through $p_1=w_0,~p_2=y,~p_3=x,~p_4=w_n$ in order. This construction works for $N= 394$, 
and with slight adjustments for all $N\ge 418$. Note that $\{ x_i,y_i,w_i,w_{i+1},x_{i+2},y_{i+2}\}$ 
is a $6$-cut for $6\le i\le n-7$, so $G$ is not $7$-connected. 
\begin{figure}[h]
\begin{center}
\scalebox{0.5}{\includegraphics{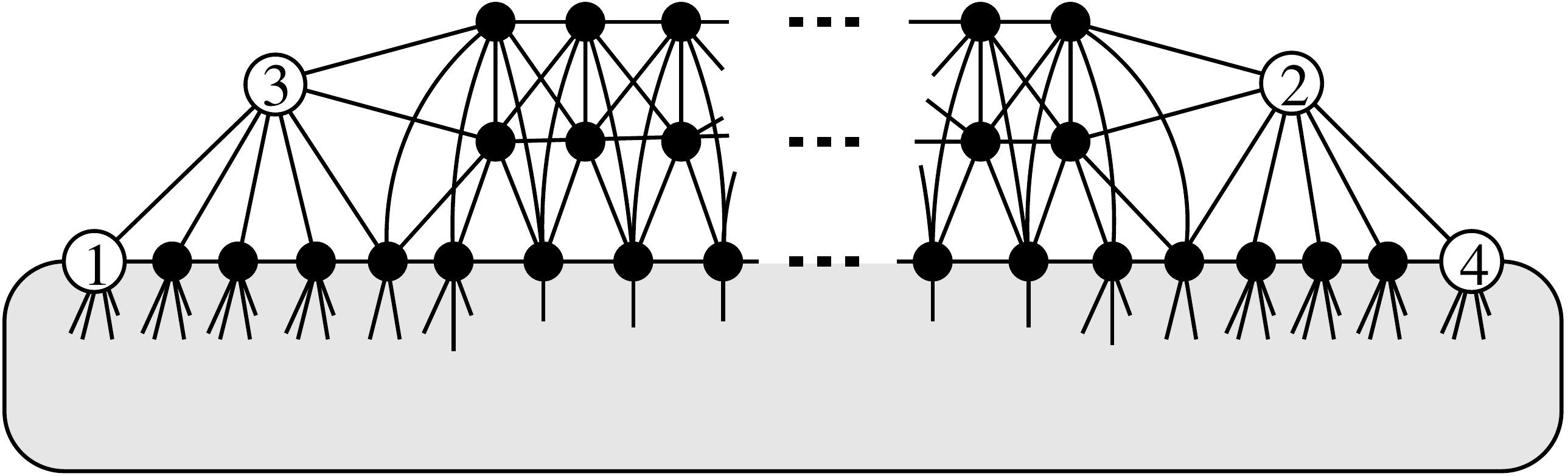}}
\end{center}
\caption{The graph $G$ in Case~\ref{c4}} \label{p46}
\end{figure}

To conclude that $\delta(6,P^4,N)\le 8$, assume that there is a $6$-connected graph $G$ with  $\delta(G)\ge 8$, which
is not $P^4$-linked, i.e., $(G_{b,c},\{ b,b'\}, \{ c,c'\} ,(a,a'))$ is an obstruction for some $a,a',b,c\in V(G)$, and
has the structure given by Theorem~\ref{char}.

Following the same arguments as in Case~\ref{c3}, we can see that $V(R_i)=\{ v_{i-1},x_{i-1},y_{i-1}, v_{i},x_{i},y_{i}\}$
for $1\le i\le m$, while $\{ x_i,y_i\} =\{ b,b'\}$ and $\{ x_{m-i},y_{m-i}\} =\{ c,c'\}$ for $0\le i \le 5$.
We now introduce two new types (8) and (8') of rungs $(R,(a,b,b'),(a',c,c'))$. Note that these rungs have proper
$3$-separations, as opposed to all other types.

\begin{tabular}{l p{0.8\textwidth} }
(8)& $|\{ a,b,b',a',c,c'\}|=6$, and $N(a)\subseteq \{ a',b,b'\}$;\\
(8')& $|\{ a,b,b',a',c,c'\}|=6$, and $N(a')\subseteq \{ a,c,c'\}$.
\end{tabular}

In fact, these types of rungs are each two rungs of type (1) in a row, so Theorem~\ref{char} is still valid
if we allow them. But we will use this notation in the following arguments.

Without loss of generality we may assume that $v_i\ne v_{i+1}$ for $1\le i\le m-1$. Otherwise, either 
$(G[V(R_{i}\cup R_{i+1})],(v_{i-1},x_{i-1},y_{i-1}),(v_{i+1},x_{i+1},y_{i+1}))$ is a rung (possibly of type (8) or (8')), 
$N(x_{i+1})\subseteq \{ v_i,x_i,y_i,y_{i+1},v_{i+2},x_{i+2},y_{i+2}\}$,
or $N(y_{i+1})\subseteq \{ v_i,x_i,y_i,x_{i+1},v_{i+2},x_{i+2},y_{i+2}\}$, contradicting $\delta(G)\ge 8$.
Similarly, we may assume that $v_0\ne v_1$.

Assume that $N(v_i)\cap V(J)\subseteq \{ v_{i-1},v{i+1},u\}$ for some $1\le i\le m-1$ and some $u\in V(J)$. Then 
$\{ x_{i},y_i\} \not\subset \{ x_{i-1},y_{i-1},x_{i+1},y_{i+1}\}$ since $d(v_i)\ge 8$. This implies that
$x_{i},y_i \notin \{ x_{i-1},y_{i-1},x_{i+1},y_{i+1}\}$, since $d(x_i),d(y_i)\ge 8$, and in fact 
$N(x_i)\setminus y_i=N(y_i)\setminus x_i=\{ v_{i-1},x_{i-1},y_{i-1},v_i,v_{i+1},x_{i+1},y_{i+1}\}$.
But since $d(v_i)\ge 8$, $N(v_i)\cap  \{ x_{i-1},y_{i-1},x_{i+1},y_{i+1}\}\ne \emptyset$, 
but this contradicts the fact that $L$ is a ladder. Therefore, every $v_i$ has at least $4$ neighbors in $V(J)$.
But this impossible by a simple application of Euler's formula.

\begin{case}\label{c5}
$k\ge 7$ 
\end{case}
We only need to show that every $7$-connected graph is $P^4$-linked, the other bounds follow from Case~\ref{c4}.
We show the slightly stronger statement that obstructions are at most $6$-connected.

Let $(G,(a,b,b'),(a',c,c'))$ be an obstruction, and suppose that $G$ is $7$-connected.
Following the same arguments as in Case~\ref{c3}, we can see that $V(R_i)=\{ v_{i-1},x_{i-1},y_{i-1}, v_{i},x_{i},y_{i}\}$
for $1\le i\le m$, while $\{ x_i,y_i\} =\{ b,b'\}$ and $\{ x_{m-i},y_{m-i}\} =\{ c,c'\}$ for $0\le i \le 3$.

Without loss of generality we may assume that $v_i\ne v_{i+1}$ for $3\le i\le m-4$. Otherwise, either 
$(G[V(R_{i}\cup R_{i+1})],(v_{i-1},x_{i-1},y_{i-1}),(v_{i+1},x_{i+1},y_{i+1}))$ is a rung  
(possibly of type (8) or (8')) or $\{ v_i,x_i,y_i,v_{i+2},x_{i+2},y_{i+2}\}$ is a cut set.

Assume that $|N_J(v_i)|,|N_J(v_{i+1})|\le 3$ for some $2\le i\le m-2$. 
We will consider 
$$S=\{ x_i,y_i,x_{i+1},y_{i+1}\}\setminus\{ x_{i-1},y_{i-1},x_{i+2},y_{i+2}\}.$$
To start with,
$S\ne\emptyset$, otherwise either $d(v_i)<7$, $d(v_{i+1})<7$,
or $L$ is not a ladder. 
If there is an edge from $v_{i-1}$ into $S$, then $v_ix_{i-1},v_iy_{i-1}\notin E(G)$, otherwise $R_i$ is not a rung.
As $d(v_i)\ge 7$, $|\{ x_i,y_i,x_{i+1},y_{i+1}\} |=4$ and $\{ x_i,y_i,x_{i+1},y_{i+1}\}\subset N(v_i)$. This implies that
there is no edge from $v_{i+1}$ to $\{ x_i,y_i\}$, otherwise $R_{i+1}$ is not a rung. But now,
$\{ v_{i-1},x_{i-1},y_{i-1},v_i,x_{i+1},y_{i+1}\}$ is a cut set, a contradiction. Thus, there is no edge from 
 $v_{i-1}$ into $S$. Similarly, there is no edge from $v_{i+2}$ into $S$. But this implies that
$\{ x_{i-1},y_{i-1},v_i,v_{i+1},x_{i+2},y_{i+2}\}$ is a cut set, a contradiction.
Therefore, at least one of  $|N_J(v_i)|,|N_J(v_{i+1})|$ must be greater than $3$ for $2\le i\le m-2$.

Now consider $J$ and $C=J\cap L$. Without loss of generality we may assume that $C$ is in fact a cycle, otherwise we may add
the missing edges, and the resulting graph is still an obstruction. Since $G$ is $7$-connected, $C$ has no chords,
and $J\setminus C$ is connected. Let $B$ be an end block of $J\setminus C$, and $x\in V(B)$ the only cut vertex of $J\setminus C$ 
in $B$ (if $B\ne J\setminus C$). 
$B$ inherits a plane embedding from the embedding of $J$, and all the vertices on the outer
face of this embedding (other than $x$) have degree $d_B(v)\ge 4$ in $B$ by the argument in the last paragraph
(and thus $|V(B)|\ge 5$ and $d_B(x)\ge 2$).
Suppose there are $k$ (including $x$) vertices on the outer face, and $\ell$ vertices not on the outer face.
For those internal vertices, we have $d_B(v)=d(v)\ge 7$. If we now connect all vertices on the outer face with 
an additional vertex $y$,
the resulting graph $B'$ is still planar. But
$$|E(B')|\ge \frac{4k+7\ell-2}{2}+k\ge 3(k+\ell+1)-4>
3|V(B')|-6,$$
contradicting the planarity of $B'$.   
\end{proof}

\section{$K^2\cup P^3$}

\begin{theorem}\label{Tk2p3}
Let $N\ge 29$. Then
$$
\begin{array}{rcccl}
&&\delta(k,K^2\cup P^3,N)&=&\left\lceil \tfrac{N+2}{2}\right\rceil ,\mbox{ if }k\le 3,\\
\smallskip
&&\delta(4,K^2\cup P^3,N)&=&\left\lceil \tfrac{N+1}{2}\right\rceil ,\\
\sqrt{\tfrac{N-1}{2}}+2.25&< & \delta(5,K^2\cup P^3,N) &< &\sqrt{3N}+4,\\
6~\le ~8-o(1)&\le&\delta(6,K^2\cup P^3,N)&\le& 10,\\ 
k&\le & \delta(k,K^2\cup P^3,N) &\le & \max\{ k,10\},\mbox{ if }k\ge 7.
\end{array}
$$
\end{theorem}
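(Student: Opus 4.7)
My plan is to follow the case-by-case structure of the proof of Theorem~\ref{thmp4}, exploiting the key reduction provided by Fact~\ref{f2}: identifying the endpoint $v_1$ of the $K^2$-component with the degree-$1$ endpoint $u_1$ of the $P^3$-component produces $P^4$, so every $K^2\cup P^3$-linked graph is $P^4$-linked, and conversely every non-$P^4$-linked graph is non-$K^2\cup P^3$-linked. This implication lets me recycle the lower-bound constructions from Theorem~\ref{thmp4} verbatim and gives the trivial bound $\delta(k,K^2\cup P^3,N)\ge \delta(k,P^4,N)$; the real work is to sharpen both inequalities to account for the extra $K^2$-edge, which effectively costs one more ``crossing'' of any given separator than $P^4$ does.

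For $k\le 4$, the upper bounds come from a direct structural argument. If $\delta(G)\ge \lceil(N+2)/2\rceil$ (or $\lceil(N+1)/2\rceil$ for $k=4$), then any $2$- or $3$-separation $(A,B)$ forces $G[A]$ and $G[B]$ to be nearly complete, missing edges only inside $A\cap B$, leaving only a finite case analysis on how $v_1,v_2,u_1,u_2,u_3$ are distributed; in each configuration the three paths can be routed explicitly. If no such small separation exists, $G$ is $5$-connected and the case $k=5$ applies. For the matching lower bounds, I take two complete graphs of sizes $\lceil(N+2)/2\rceil$ and $\lfloor(N+2)/2\rfloor$ sharing two vertices, and place $v_1,u_2$ on one side and $v_2,u_1,u_3$ on the other: three internally disjoint paths would then require three disjoint crossings of the $2$-cut, which is impossible. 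A single additional diagonal edge raises the connectivity to $3$ while preserving an admissible choice of five vertices avoiding its endpoints, and a similar tweak handles $k=4$.

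The main technical case is $k=5$. For the upper bound $\sqrt{3N}+4$, I would pass to the auxiliary graph $G^*=G_{v_1,u_1}$; by Fact~\ref{f2} $G^*$ is not $P^4$-linked, and since $G$ is $5$-connected $G^*$ remains $4$-connected, so Theorem~\ref{char} yields a ladder-plus-planar decomposition $(J,L)$. The argument then parallels Cases~\ref{c2} and~\ref{c3} of Theorem~\ref{thmp4}, except that the identified vertex $v_1{=}u_1$ inherits degree up to $2\delta-1$ in $G^*$ and replaces one ``dimension'' of the ladder by a single edge to $v_2$; counting edges from $J$ into the ladder by Euler's formula and the minimum-degree condition gives $N\ge \tfrac{1}{3}(\delta-4)^2$. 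For the matching lower bound I construct a $5$-connected graph from a single planar layer of roughly $(\delta-c)\times(\delta-c)$ copies of $K^{\delta+1}$ glued along consecutive pairs of vertices, a two-dimensional analogue of the grid in Case~\ref{c2} of Theorem~\ref{thmp4}, and add a ``crossbar'' edge $v_1v_2$ whose endpoints are placed diagonally so that any realization of $u_1$-$u_2$-$u_3$ blocks the $v_1$-$v_2$ path; careful vertex counting gives $|V(G)|\le 2(\delta-2.25)^2+1$.

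Cases $k\ge 6$ are comparatively easy. The upper bound $10$ is immediate from the corollary to Theorem~\ref{klinked}: a $6$-connected graph with $\delta\ge 10$ is $3$-linked, hence $K^2\cup P^3$-linked by Corollary~\ref{c1}, since $|E(K^2\cup P^3)|=3$. The lower bound $8-o(1)$ for $k=6$ reuses Yu's example from Case~\ref{c4} of Theorem~\ref{thmp4}, which is not $P^4$-linked and therefore not $K^2\cup P^3$-linked. The hardest step overall is the upper bound at $k=5$: verifying that the merged graph $G^*$ retains enough connectivity and minimum degree for Theorem~\ref{char} to apply, and extracting the sharp constant $\sqrt{3N}+4$ from the rung-by-rung degree accounting while correctly attributing the extra constraint coming from the $v_1v_2$ edge without double-counting against the ladder structure.
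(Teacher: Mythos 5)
Your overall scaffolding (recycling Fact~\ref{f2} for lower bounds, near-complete-halves analysis for $k\le 4$, the Thomas--Wollan corollary for $k\ge 6$) is sound and matches the paper where it applies. But the centerpiece of your plan, the upper bound $\sqrt{3N}+4$ at $k=5$, rests on using Fact~\ref{f2} in the wrong direction. Fact~\ref{f2} says that every $(K^2\cup P^3)$-linked graph is $P^4$-linked; it does \emph{not} say that a graph failing to be $(K^2\cup P^3)$-linked fails to be $P^4$-linked, nor that any identification $G^*=G_{v_1,u_1}$ of terminals produces a $P^4$-obstruction. A missing $(K^2\cup P^3)$-linkage is the failure of a \emph{disjoint} $a$--$b$ path and $p_1$--$p_2$--$p_3$ path, and there is no auxiliary graph in which this becomes a ``no path through four vertices in order'' statement, so Yu's characterization (Theorem~\ref{char}) and its ladder-plus-planar decomposition are simply not available here. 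The natural reduction is instead to a \emph{3-linkage} problem: split $p_2$ into $p_2,p_2'$ with $N(p_2')=N[p_2]$ and ask for disjoint $a$--$b$, $p_1$--$p_2$, $p_2'$--$p_3$ paths. That is what the paper does: it proves a claim that every minimal bad $5$-separation $(A,B)$ with $p_2\notin B$ has $G[A\cap B]$ complete (via Theorem~\ref{3linked} applied to $G[B]$ plus an apex), then takes the union of all maximal such separations, and runs an edge count on the resulting core $G'$ to invoke Theorem~\ref{3linked} directly, with the final contradiction coming from bounding $|N(p_2)\cap\cB|$. None of this is recoverable from your rung-by-rung Euler-formula accounting, so the $k=5$ upper bound in your proposal does not go through as written.

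Two smaller points. First, your $k=5$ lower-bound construction is described as a two-dimensional array of roughly $(\delta-c)\times(\delta-c)$ copies of $K^{\delta+1}$, which would have $\Theta(\delta^3)$ vertices and could not satisfy $|V(G)|\le 2(\delta-2.25)^2+1$; the correct construction is a one-dimensional chain of $2\delta-7$ cliques of size $\delta+1$ glued along consecutive pairs, with the two extra terminals $a$ and $p_2$ attached along the chain so that the $a$--$b$ path and the $p_1$--$p_2$--$p_3$ path compete for the same pair-cuts. Second, for $k\le 3$ the clean route to the lower bound $\lceil(N+2)/2\rceil$ is Fact~\ref{f3} ($(K^2\cup P^3)$-linked graphs are $4$-connected), which your $2$-cut-plus-diagonal construction reproves but less directly. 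The $k\ge 6$ cases in your proposal are fine.
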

\begin{proof}
\begin{case}
$k\le 3$
\end{case}
By Fact~\ref{f3}, every $K^2\cup P^3$-linked graph is $4$-connected. This implies that 
$\delta(k,K^2\cup P^3,N)\ge\left\lceil \frac{N+2}{2}\right\rceil$. 
Equality follows from 
the next case as every graph with minimum degree $\left\lceil \frac{N+2}{2}\right\rceil$ is $4$-connected.
\begin{case}\label{c6}
$k=4$
\end{case} 
To show that
$\delta(4,K^2\cup P^3,N)> \lfloor\frac{N}{2}\rfloor$ consider a graph $G$
consisting of two complete graphs $G_1$ and $G_2$ with $|G_1|=\lceil\frac{N+2}{2}\rceil$, 
$|G_2|=\lfloor\frac{N+2}{2}\rfloor$,
and $|G_1\cap G_2|=2$, and two additional edges $p_2b,~p_1a$ with $p_2, a\in V(G_1\setminus G_2)$ 
and $p_1,b\in V(G_2\setminus G_1)$.
If we choose $p_3\in V(G_2)$, then $G$ contains no $(K^2\cup P^3)$-linkage consisting of a $a-b$ 
path and a $p_1-p_2-p_3$ path. 

Now let $G$ be a $4$-connected graph on $N$ vertices with minimum degree
$\delta(G)\ge \left\lceil \frac{N+1}{2}\right\rceil$. If $G$ is $5$-connected, then $G$ is $K^2\cup P^3$-linked
by the next case,
so we may assume that $G$ has a $4$-separation $(A,B)$.
If $N$ is even, then $|A|=|B|=\frac{N+4}{2}$, and $G[A]$ and $G[B]$ can have missing
edges only inside $A\cap B$.
Such a graph can easily be seen to be  $(K^2\cup P^3)$-linked.

If $N$ is odd,  then we may assume that $|A|=\frac{N+3}{2}$ and $|B|=\frac{N+5}{2}$.
Again, $G[A]$ is complete up to some missing edges inside $A\cap B$. Further, $G[B]$
can only miss a matching and then some edges inside $A\cap B$. In particular,
there exists a matching with four edges between $B\setminus A$ and $A$.
Such a graph can easily be seen to be  $(K^2\cup P^3)$-linked: 
given vertices $a,b,p_1,p_2,p_3\in V(G)$, find a shortest $p_1-p_2-p_3$ path 
using the fewest possible vertices in $A\cap B$ and none of $a,b$ such
that the remaining graph is still connected.
\begin{case}\label{c7}
$k=5$
\end{case}
For the lower bound, we will construct a graph similar to those in the proof of Theorem~\ref{thmp4}.
Let $\delta\ge 5$. Let $Z_i$, $1\le i\le 2\delta -7$  be complete graphs with $|V(Z_i)|=\delta+1$. Let
$\{ v_i,x_{i-1},y_{i-1},x_i,y_i\}\subset V(Z_i)$, 
and otherwise the $V(Z_i)$ are disjoint. 
Let $V(G)=\{ a,p_2\}\cup \bigcup V(Z_i)$, let $p_1=x_{2\delta-7},~p_3=y_{2\delta-7}$
and $b=x_0$. Add the edges $ap_i,~bp_i$ for $1\le i\le 2$, $p_2y_0$, $av_{2\delta -6-j},~p_2v_{j}$ for 
$1\le j\le \delta-3$, and $v_jv_{j+1}$ for $1\le j\le 2\delta-8$ (see Figure~\ref{k2p3}).

\begin{figure}[h]
\begin{center}
\scalebox{0.5}{\includegraphics{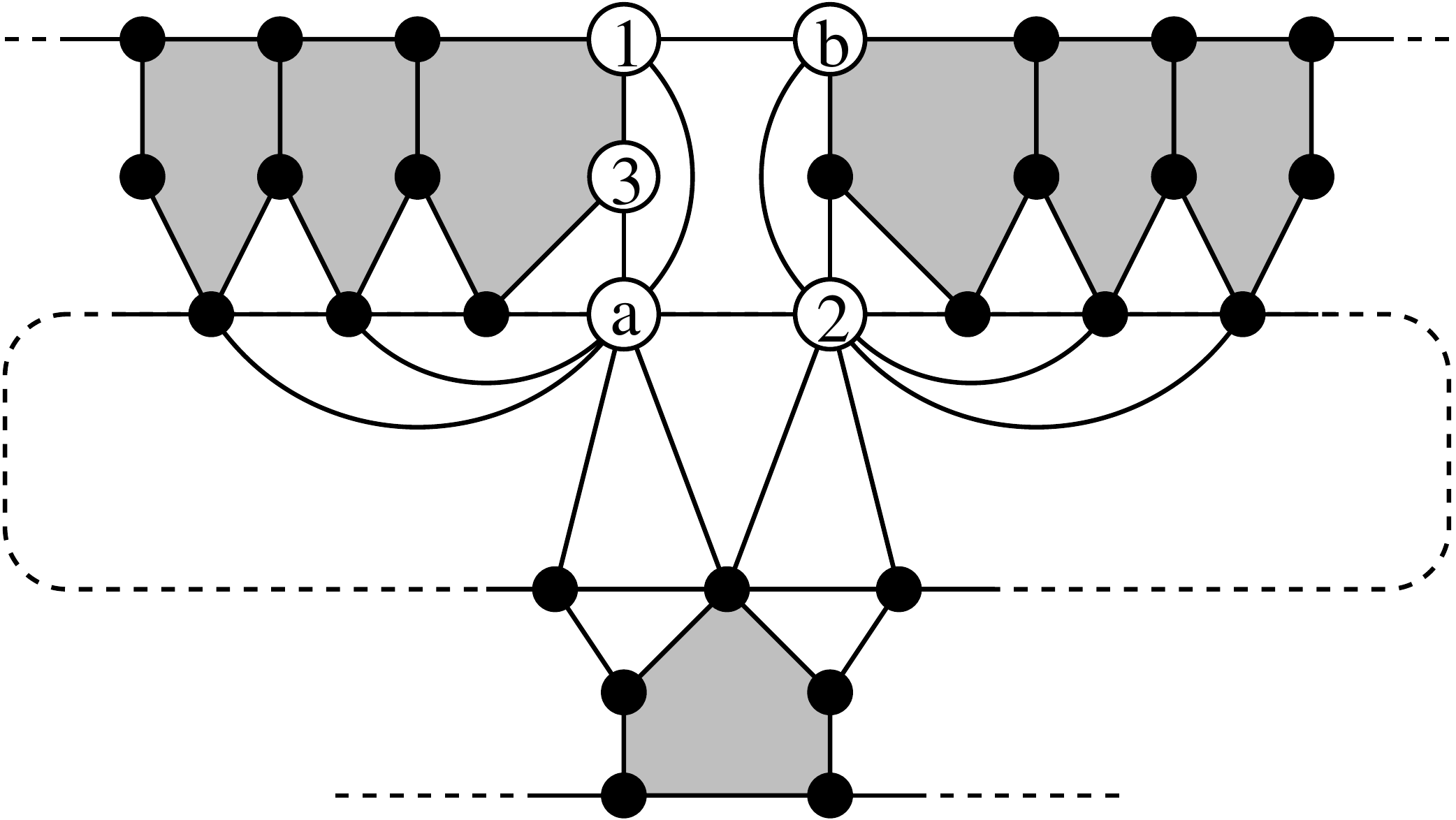}}
\end{center}
\caption{The graph $G$ in Case~\ref{c7}}\label{k2p3}
\end{figure}

Then $\delta(G)=\delta$, $G$ is $5$-connected, and $N=|V(G)|=2\delta^2-9\delta+11<2(\delta-2.25)^2+1$. 
Further, there is not an $a-b$ path and a $p_1-p_2-p_3$ path, which are disjoint. 
Therefore,  $\delta(5,K^2\cup P^3,N)>\sqrt{\frac{N-1}{2}}+2.25$.

For the upper bound, we will first show the following claim.
\begin{claim}
Let $G$ be a graph with minimum degree $\delta=\delta(G)\ge 8$, 
let $X=\{ a,b,p_1,p_2,p_3\}\subset V(G)$. 
Suppose that $G$ has no $4$-separation $(A,B)$ with
$X\subseteq A$. Suppose that  $G$ does not contain disjoint
$a-b$ and $p_1-p_2-p_3$ paths, 
and suppose that no edge can be added without destroying this property.

Then for every $5$-separation $(A,B)$ with
$X\subseteq A$ and $p_2\notin  B$, $A\cap B$ induces a $K^5$.
\end{claim}
For the sake of contradiction, choose a $5$-separation $(A,B)$ with
$X\subseteq A$ and $p_2\notin B$, for which $G[A\cap B]$ is not complete, such that $B$ is minimal.
But now it is easy to see that $(B,A\cap B)$ is linked (you may apply Theorem~\ref{3linked} to $G[B]+x$,
where the added vertex $x$ is joined to every vertex, concluding that $(G[B]+x,(A\cap B)\cup\{ x\})$ is linked).
This shows that adding edges within $A\cap B$
will not create disjoint $a-b$ and $p_1-p_2-p_3$ paths, showing the claim.

Next, let $N\ge 29$, $\delta=\delta(G)\ge \sqrt{3N}+5$ and consider the set $\cS$
of all $5$-separations  $(A,B)$ with
$X\subseteq A$, $p_2\notin B$, and $B$ is maximal (i.e., there is no such separation $(A',B')$
with $B\subsetneq B'$). For every such separation, $|B|\ge \delta+1$, and for every pair of two 
such separations, $B\cap B'\subset A\cap A'$.

Let
$$\cA :=V(G)\setminus\bigcup_{(A,B)\in \cS}B ~,~\cB:=\bigcup_{(A,B)\in \cS}A\cap B.$$
Note that $|\cB|\le 5|\cS|\le 5\frac{N-|\cA|- |\cB|}{\delta -4}$.
Consider the graph $G'$ obtained from $G[\cA\cup \cB]$ by adding a
vertex $p_2'$ with $N(p_2')=N[p_2]$. Note that $G'$ has no
$5$-separation $(A,B)$ with $X\cup p_2'\subseteq A$.

If $|\cA|\ge \delta$, then (using that every vertex in $\cB$ has at least $6$ neighbors in $G'$,
$|\cB|\le 5|\cS|\le 5\frac{N}{\delta-4}<\frac{5}{\sqrt{3}}\sqrt{N}$
and $\delta\ge \sqrt{3N}+4$)
$$
|E(G')|\ge \tfrac{\delta}{2}(|\cA|+1)+3|\cB|\ge  5|\cA|+5|\cB|-9=5|V(G')|-14.
$$
If $6\le |\cA|\le \delta$, then 
\begin{multline*}
|E(G')|\ge (\delta-\tfrac{1}{2}|\cA|)(|\cA|+1)+2|\cB|\\
\ge (|\cA|-4)\sqrt{3N}-\tfrac{|\cA|}{2}(|\cA|+1)+ 5|\cA|+5|\cB|-9\\
\ge 5|\cA|+5|\cB|-9=5|V(G')|-14.
\end{multline*}
Therefore, $(G',X\cup p_2')$ is linked by Theorem~\ref{3linked}, and we can find
the desired linkage in $G$, a contradiction. Thus,  $|\cA|\le 5$.

Finally, if $|\cA|\le 5$, note that if $p_2$ has more than $3$
neighbors in some $B$, then $G'$ contains a $K^6$ and so $(G',X\cup p_2')$ is
linked, a contradiction.
Thus, 
$$
\delta-4\le |N(p_2)\cap \cB|\le 3|\cS|\le 3\frac{N-|\cA\cup\cB|}{\delta-4},
$$
contradicting the fact that $\delta\ge \sqrt{3N}+4$.
This shows that $\delta(5,K^2\cup P^3,N)<\sqrt{3N}+4$ for $N\ge 29$.

\begin{case}
$k\ge 6$
\end{case}
The lower bound for $\delta(k,K^2\cup P^3,N)$ follows from Theorem~\ref{thmp4}, 
the upper bound follows from Theorem~\ref{3linked}.

\end{proof}

\section{$K^2\cup C^2$ 
and $P^3\cup P^3$}
By Fact~\ref{f2}, every $(K^2\cup P^3)$-linked graph is $(K^2\cup C^2)$-linked. Thus, all the upper bounds
in Theorem~\ref{Tk2p3} apply to $\delta(k,K^2\cup C^2,N)$ as well. As for lower bounds, note that all the examples
in the proof of Theorem~\ref{Tk2p3} with $k\le 5$ yield the same lower bounds for 
$\delta(k,K^2\cup C^2,N)$ (none of them contains a disjoint $a-b$ path and a cycle through $p_1$ and $p_2$). For $k=6$,
we can employ again the example in Case~\ref{c4} in the proof of Theorem~\ref{thmp4}, and note that
this graph does not contain  a disjoint $p_1-p_2$ path and a cycle through $p_3$ and $p_4$. Therefore, we have the following theorem.

\begin{theorem}\label{Tk2c2}
Let $N\ge 29$. Then
$$
\begin{array}{rcccll}
&&\delta(k,K^2\cup C^2,N)&=&\left\lceil \tfrac{N+2}{2}\right\rceil ,&\mbox{ if }k\le 3,\\
\smallskip
&&\delta(4,K^2\cup C^2,N)&=&\left\lceil \tfrac{N+1}{2}\right\rceil ,&\\
\sqrt{\tfrac{N-1}{2}}+2.25&< & \delta(5,K^2\cup C^2,N) &< &\sqrt{3N}+4,&\\
6~\le ~8-o(1)&\le&\delta(6,K^2\cup C^2,N)&\le& 10,&\\ 
k&\le & \delta(k,K^2\cup C^2,N) &\le & \max\{ k,10\},&\mbox{ if }k\ge 7.
\end{array}
$$
\end{theorem}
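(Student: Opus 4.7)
The plan is to follow the roadmap indicated in the paragraph immediately preceding the theorem: transfer the upper bounds from Theorem~\ref{Tk2p3} using Fact~\ref{f2}, and reuse the lower-bound constructions from Theorem~\ref{Tk2p3} and Case~\ref{c4} of Theorem~\ref{thmp4} after verifying that they obstruct the $(K^2\cup C^2)$-linkage under an appropriate vertex labeling.

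For the upper bounds, I would first observe that $K^2\cup C^2$ is obtained from $K^2\cup P^3$ by identifying the two endpoints of the $P^3$-component; these are non-adjacent and each has degree $1$, so Fact~\ref{f2} applies and every $(K^2\cup P^3)$-linked graph is $(K^2\cup C^2)$-linked. Consequently, every upper bound stated in Theorem~\ref{Tk2p3} applies verbatim to $\delta(k,K^2\cup C^2,N)$, giving the right-hand inequalities of all five lines.

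For the lower bounds, I would re-examine each construction from the proof of Theorem~\ref{Tk2p3} and assign the roles of the $K^2$-endpoints and of the two $C^2$-vertices. In the $k\le 3$ and $k=4$ two-clique examples, picking $\{a,b\}$ as the $K^2$ endpoints and $\{p_1,p_2\}$ as the $C^2$ vertices leaves any $a$--$b$ path forced through the $2$-vertex separator, leaving no room for an $\{a,b\}$-disjoint cycle through $p_1$ and $p_2$; this gives the $\lceil (N+2)/2\rceil$ and $\lceil (N+1)/2\rceil$ lower bounds. In the $k=5$ ladder-type example from Case~\ref{c7}, the analogous check shows that both an $a$--$b$ path and a cycle through $p_1,p_2$ would demand the same spine $v_1\ldots v_{2\delta-7}$, yielding the $\sqrt{(N-1)/2}+2.25$ bound. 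For $k=6$, I would take the Case~\ref{c4} ladder from the proof of Theorem~\ref{thmp4} and assign $p_1,p_2$ (the endpoints $w_0,w_n$) to the $K^2$ and $\{p_3,p_4\}=\{x,y\}$ to the $C^2$; the planar/ladder obstruction that forbids a path through $p_1,p_2,p_3,p_4$ in order also forbids a disjoint $p_1$--$p_2$ path and cycle through $p_3,p_4$, since such a linkage would yield the missing ordered path. Together with Theorem~\ref{thmp4}, this produces the $8-o(1)$ lower bound for $k=6$, and for $k\ge 7$ only the trivial lower bound $k$ is needed.

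The main obstacle is the verification step for the lower bounds, because each extremal graph was originally designed to obstruct a specific ordered-path problem, and one must confirm that the same cut structure also obstructs the weaker-looking cycle-through-two-vertices requirement. The $k=6$ case is the most delicate: one must argue that any cycle through $p_3,p_4$ inside the ladder, together with a disjoint $p_1$--$p_2$ path, could be cut open at $p_3$ and $p_4$ to produce a path through the four vertices in one of the forbidden orders, contradicting the conclusion of Case~\ref{c4}. Once this reduction is in hand, no new quantitative work is required and the bounds follow directly.
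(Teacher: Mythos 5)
Your overall route is the same as the paper's: transfer the upper bounds via Fact~\ref{f2} (identifying the two degree-one endpoints of the $P^3$-component turns $K^2\cup P^3$ into $K^2\cup C^2$), and reuse the extremal examples from Theorem~\ref{Tk2p3} and Case~\ref{c4} of Theorem~\ref{thmp4} for the lower bounds. The upper-bound half and the lower bounds for $k\le 5$ are fine and match the paper (with the same terminal assignments: $\{a,b\}$ for the $K^2$ and $\{p_1,p_2\}$ for the $C^2$).

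There is, however, a genuine gap in your $k=6$ case. Your terminal assignment --- $K^2$ on $\{w_0,w_n\}$ and $C^2$ on $\{x,y\}$ --- does not obstruct the linkage: the ladder contains the cycle $x\,x_6\,x_7\cdots x_{n-5}\,y\,y_{n-5}\cdots y_6\,x$ (using the edges $xx_6$, $xy_6$, $yx_{n-5}$, $yy_{n-5}$ and the edges $x_ix_{i+1}$, $y_iy_{i+1}$ inside the $5$-cliques), which passes through $x$ and $y$ and avoids $J$ entirely, while $J$ is connected and so contains a $w_0$--$w_n$ path disjoint from that cycle. So for your choice of $\tau$ the graph \emph{is} linked, and no lower bound follows. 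The reduction you propose to rescue this --- cutting the cycle open at $p_3,p_4$ to produce a path through all four vertices in a forbidden order --- cannot work either: the $p_1$--$p_2$ path and the cycle are vertex-disjoint, so there is no way to concatenate them into a single path, and in any case the construction of Case~\ref{c4} only excludes the one specific order $w_0,y,x,w_n$, not all orders. The paper instead keeps the original roles $p_1=w_0$, $p_2=y$, $p_3=x$, $p_4=w_n$ and asserts that the graph has no disjoint $p_1$--$p_2$ path together with a cycle through $p_3$ and $p_4$; this choice splits the terminals so that one $K^2$-terminal ($y$) and one $C^2$-terminal ($x$) sit at the two ends of the ladder, which is what makes the cut structure bite. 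Your write-up would need to adopt that assignment and verify the non-existence claim for it directly rather than via a reduction to the $P^4$ statement.
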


Now that we have considered all multigraphs with up to 3 edges, let us consider graphs
$H$ with 4 edges. We can prove the following theorem about $H=P^3\cup P^3$.

\begin{theorem}\label{Tp3p3}
Let $N$ be large enough.
Then
$$
\begin{array}{rcccll}
&&\delta(6,P^3\cup P^3,N)&=&\left\lceil \tfrac{N+2}{2}\right\rceil ,&\\
\sqrt{\tfrac{N-2}{2}}+3.25&< & \delta(7,P^3\cup P^3,N) &< &\sqrt{5N}+6,&\\
8&\le&\delta(8,P^3\cup P^3,N)&\le& 40&
\end{array}
$$
\end{theorem}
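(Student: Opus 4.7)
The plan is to treat the three cases $k=6,7,8$ separately, following the template of Theorem~\ref{Tk2p3} with parameters shifted to reflect that $P^3\cup P^3$ has one more vertex and one more edge than $K^2\cup P^3$.

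The case $k=8$ is handled by observing that $|E(P^3\cup P^3)|=4$, so Corollary~\ref{c1} gives that every $4$-linked graph is $P^3\cup P^3$-linked. Combined with Theorem~\ref{klinked}, an $8$-connected graph with at least $20|V(G)|$ edges---equivalently, average degree at least $40$---is $4$-linked, so $\delta(G)\ge 40$ suffices for $P^3\cup P^3$-linkage and hence $\delta(8,P^3\cup P^3,N)\le 40$. The matching lower bound $\delta\ge k=8$ is automatic from the definition.

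For $k=7$ the lower bound will be a construction modelled on Figure~\ref{k2p3}: a chain of $\Theta(\delta)$ copies of $K^{\delta+1}$ glued along $5$-element subsets of a spine, plus two extra ``hub'' vertices serving as the middle vertices $a_2$ and $b_2$, each joined to a suitable prefix or suffix of the spine, with the remaining four endpoints $a_1,a_3,b_1,b_3$ placed at the two ends; a parameter count should give $N\approx 2(\delta-3.25)^2+2$ and hence $\delta>\sqrt{(N-2)/2}+3.25$. For the upper bound I would imitate Case~\ref{c7}: assuming $\delta\ge\sqrt{5N}+6$ and an edge-maximal counterexample with endpoint set $X=\{a_1,a_2,a_3,b_1,b_2,b_3\}$, first establish the claim that every maximal $5$-separation $(A,B)$ with $X\subseteq A$ and $\{a_2,b_2\}\cap B=\emptyset$ satisfies $G[A\cap B]\cong K^5$ by applying Theorem~\ref{3linked} to $G[B]$ plus a universal vertex; then collect such separations into a family $\cS$, set $\cA:=V(G)\setminus\bigcup_{(A,B)\in\cS}B$ and $\cB:=\bigcup_{(A,B)\in\cS}A\cap B$ exactly as in Case~\ref{c7}, bound $|\cB|\le 5|\cS|\le 5N/(\delta-4)$, and apply Theorem~\ref{3linked} to an auxiliary graph in which $a_2$ and $b_2$ are replaced by universal vertices to force the linkage, contradicting the counterexample.

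For $k=6$ the lower bound comes from a $6$-connected graph analogous to the one in Case~\ref{c6}: two nearly-balanced cliques glued along a small common vertex set, augmented with a few extra edges across the cut so that a suitable placement of the six endpoints forces more path-segments to cross the common set than the common vertices and extra edges can simultaneously accommodate (using the same ``strictly internally disjoint'' analysis that makes the $K^2\cup P^3$ construction of Case~\ref{c6} fail), while the minimum degree stays at $\lceil N/2\rceil$. For the upper bound, assume $\delta(G)\ge\lceil(N+2)/2\rceil$; if $G$ is $7$-connected then the threshold $\sqrt{5N}+6$ of the $k=7$ case is already met for $N$ large, so that case applies. Otherwise $G$ has a $6$-separation $(A,B)$ with $|A|,|B|\ge\lceil(N+4)/2\rceil$, which forces $G[A]$ and $G[B]$ each to be a complete graph minus at most a matching inside $A\cap B$; in this tightly constrained setting the linkage can be routed directly, sending each of the two paths around the side containing four of its six endpoints and using at most two of the six cut vertices as crossings. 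The principal obstacle is the $k=7$ upper bound: the factor $5$ in $\sqrt{5N}$ (rather than the $3$ of the analogous $K^2\cup P^3$ case) reflects that both middle vertices $a_2$ and $b_2$ now need large neighbourhoods in $\cB$, and balancing their contributions to the edge count of the auxiliary graph so that the hypothesis of Theorem~\ref{3linked} is verifiable is the delicate bookkeeping step.
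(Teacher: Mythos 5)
Your architecture differs from the paper's. The paper obtains all three upper bounds in one line: $P^3\cup P^3$ arises from $2K^2\cup P^3$ by identifying degree-one vertices, so Fact~\ref{f2} gives $\delta(k,P^3\cup P^3,N)\le\delta(k,2K^2\cup P^3,N)$, and the right-hand side is bounded by Theorem~\ref{Tlk2p3} with $\ell=2$ (which is exactly where the constants $\sqrt{5N}+6$ and $40$ come from). You instead prove the upper bounds directly. For $k=8$ your route (Corollary~\ref{c1} plus Theorem~\ref{klinked} with $k=4$) is correct and recovers the same constant $40$. Your lower-bound constructions are in the right spirit: the paper's $k=6$ example is two cliques sharing $3$ vertices plus three crossing edges $p_iq_i$, defeated by the placement of a $p_1-q_2-p_3$ path and a $q_1-p_2-q_3$ path, and its $k=7$ example is the chain-of-cliques-with-two-hubs you describe (glued along triples, not $5$-sets), giving $N=2\delta^2-13\delta+23$. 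You leave both as sketches, but they can be completed.

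The genuine gap is the $k=7$ upper bound. Your plan is to imitate Case~\ref{c7} of Theorem~\ref{Tk2p3} and finish with Theorem~\ref{3linked}. But Theorem~\ref{3linked} is the extremal result for $3$-linkages: it certifies three disjoint paths joining six pairwise distinct terminals. In Case~\ref{c7} this suffices because only the single middle vertex $p_2$ must be split (into $p_2,p_2'$ with $N(p_2')=N[p_2]$), producing the three demands $a-b$, $p_1-p_2$, $p_2'-p_3$ on six terminals. For $P^3\cup P^3$ both middle vertices $a_2$ and $b_2$ must be split, which yields four path demands on eight terminals, i.e.\ a $4$-linkage, and Theorem~\ref{3linked} does not apply; the only $4$-linkage tool available, Theorem~\ref{klinked}, needs $8$-connectivity and gives only the constant bound $40$, not $O(\sqrt{N})$ at connectivity $7$. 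Your alternative phrasing --- ``replace $a_2$ and $b_2$ by universal vertices'' --- does not repair this: a linkage found in a graph where $a_2$ receives edges it does not have in $G$ cannot be pulled back to $G$; the twin trick works precisely because $p_2'$ is only joined into $N[p_2]$. So the step you yourself flag as ``delicate bookkeeping'' is not bookkeeping but a missing $4$-linkage analogue of Theorem~\ref{3linked}; the paper sidesteps it here by quoting Theorem~\ref{Tlk2p3} (whose own proof is only sketched). Note also that your $k=6$ upper bound falls back on the $k=7$ case whenever $G$ is $7$-connected, so it inherits the same gap.
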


\begin{proof}
For the upper bounds, we use Theorem~\ref{Tlk2p3} and that 
$
\delta(k,P^3\cup P^3,N)\le \delta(k,2K^2\cup P^3,N)$ by Fact~\ref{f2}. 
For the lower bounds we find examples.
\begin{case}
$k=6$
\end{case}
Let $G$ consist of two complete graphs $G_1$ and $G_2$ with $|G_1|=\lceil\frac{N+3}{2}\rceil$, 
$|G_2|=\lfloor\frac{N+3}{2}\rfloor$,
and $|G_1\cap G_2|=3$, and three additional edges $p_1q_1,~p_2q_2,~p_3q_3$ with $p_i\in V(G_1\setminus G_2)$ 
and $q_i\in V(G_2\setminus G_1)$.
Then $G$ contains no $(P^3\cup P^3)$-linkage consisting of a $p_1-q_2-p_3$ 
path and a $q_1-p_2-q_3$ path.
\begin{case}\label{4.2.2}
$k=7$
\end{case}
Let $\delta\ge 7$. Let $Z_i$, $1\le i\le 2\delta -9$,  be complete graphs with $|V(Z_i)|=\delta+1$.\\ 
Let
$\{ v_i, x_{i-1}, y_{i-1} , z_{i-1}, x_i, y_i, z_i\}\subset V(Z_i)$, 
and otherwise the $V(Z_i)$ are disjoint. 
Let $V(G)=\{ p_2,q_2\}\cup \bigcup V(Z_i)$, let $p_1=x_{2\delta-9},~p_3=y_{2\delta-9}$, $q_1=x_0$ and $q_3=y_0$. Add the edges $p_iq_i$ for $1\le i\le 3$, $p_1p_2$, $p_2p_3$, $p_2z_{2\delta-9}$, $q_1q_2$, $q_2q_3$, $q_2z_0$, $q_2v_{j},~p_2v_{2\delta -8-j}$ for 
$1\le j\le \delta-4$, and $v_jv_{j+1}$ for $1\le j\le 2\delta-10$ (see Figure~\ref{p3p3}).

\begin{figure}[h]
\begin{center}
\scalebox{0.5}{\includegraphics{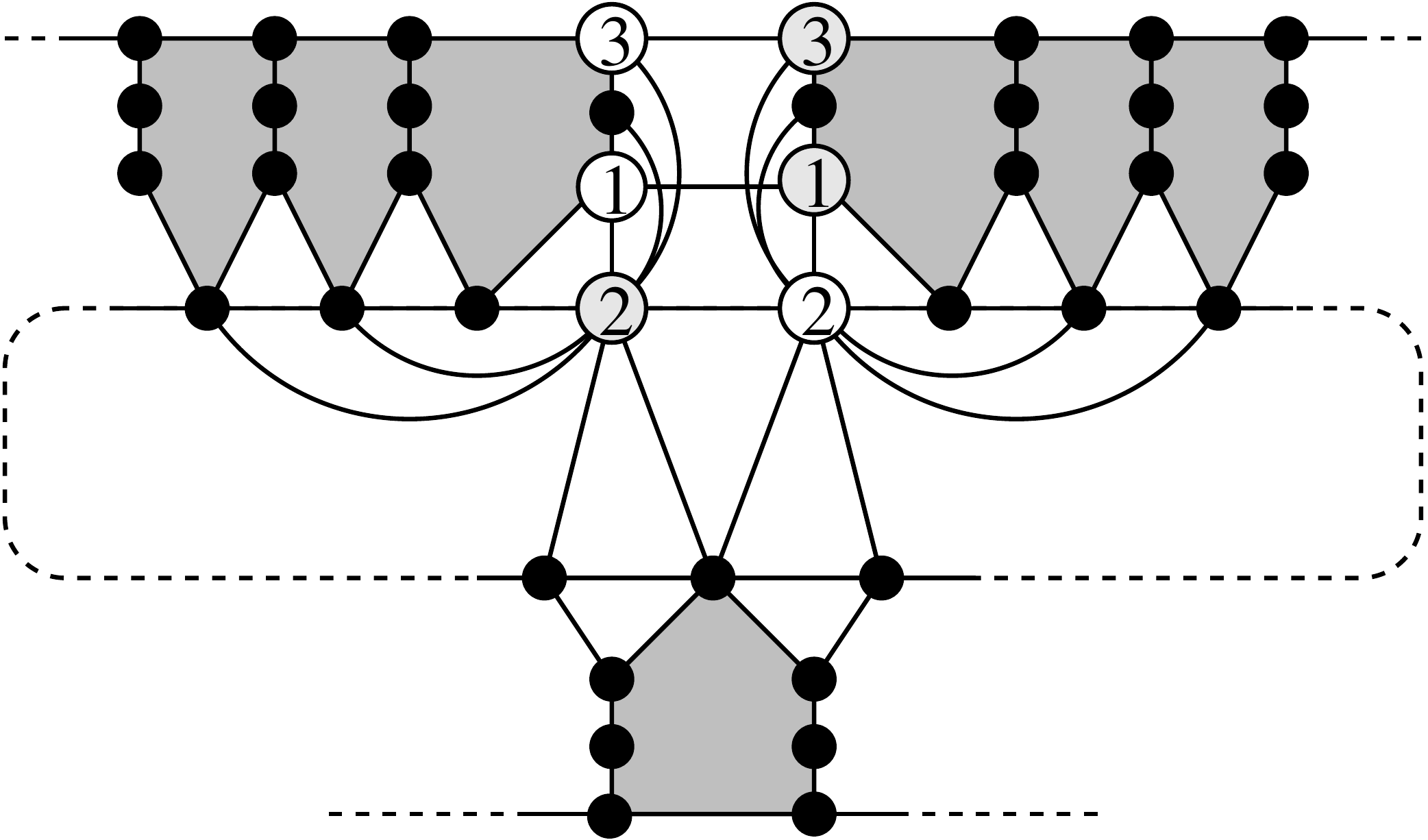}}
\end{center}
\caption{The graph $G$ in Case~\ref{4.2.2}}\label{p3p3}
\end{figure}

Then $\delta(G)=\delta$, $G$ is $7$-connected, and $N=|V(G)|=
2\delta^2-13\delta+23<2(\delta-3.25)^2+2$. 
Further, there is not an $p_1-q_2-p_3$ path and a $q_1-p_2-q_3$ path, which are disjoint. 
Therefore,  $\delta(7,P^3\cup P^3,N)>\sqrt{\frac{N-2}{2}}+3.25$.

\end{proof}

\section{Bipartite $H$ with small components}

Very similarly to Theorems~\ref{Tk2p3} and~\ref{Tk2c2}, 
we obtain the following result.
\begin{theorem}\label{Tlk2p3}
Let $N$ be large enough, $\ell\ge 1$ and $H\in \{\ell~K^2\cup P^3,\ell~K^2\cup C^2\}$. Then
$$
{\small
\begin{array}{rcccl}
&&\delta(2\ell+1,H,N)&=&\left\lceil \frac{N+\ell+1}{2}\right\rceil ,\\
&&\delta(2\ell+2,H,N)&=&\left\lceil \frac{N+\ell}{2}\right\rceil ,\\
\sqrt{\frac{N-2\ell+1}{2}}+2\ell+0.25&< & \delta(2\ell+3,H,N)&<
&\sqrt{(2\ell+1)N}+2\ell+2,\\
2\ell+4&\le & \delta(2\ell+4,H,N)&\le & 10(\ell+2).
\end{array}
}
$$
\end{theorem}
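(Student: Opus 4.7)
My plan is to follow the four-case template used in the proofs of Theorems~\ref{Tk2p3} and~\ref{Tk2c2}, generalizing each case from $\ell=1$ to arbitrary $\ell$. For the first row ($k=2\ell+1$) the lower bound is immediate from Fact~\ref{f3}, since $|H|=2\ell+3$ forces every $H$-linked graph to be $(2\ell+2)$-connected, so a pair of nearly balanced complete graphs sharing exactly $2\ell+1$ vertices is $(2\ell+1)$-connected but not $H$-linked and realizes the lower bound. The matching upper bound follows because the degree hypothesis is strong enough to force $(2\ell+2)$-connectivity, reducing to the next row. For the second row ($k=2\ell+2$) I build the lower-bound graph from two nearly balanced complete graphs sharing $\ell+1$ vertices and add $\ell+1$ crossing edges whose endpoints are exactly the $H$-endpoints $\{a_1,b_1,\ldots,a_\ell,b_\ell,p_1,p_2\}$ (or the $C^2$-analogue): the crossing edges conflict with $H$-endpoints, so only the $\ell+1$ intersection vertices are available as internal points for the $\ell+2$ required internally disjoint cross-cut paths, and hence $G$ is not $H$-linked. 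The matching upper bound is the usual observation that any $(2\ell+2)$-separation meeting the degree hypothesis has nearly complete sides with missing edges only inside $A\cap B$, from which the $H$-linkage is produced directly, mirroring Case~\ref{c6}.

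For the third row ($k=2\ell+3$) I generalize Case~\ref{c7}. The lower-bound graph is a ladder of complete graphs $Z_i$ of order $\delta+1$, each sharing $\ell+1$ consecutive vertices with its two neighbors, with the $2\ell+3$ images of $V(H)$ distributed at the two ends so that the $\ell+2$ required paths are forced through a bottleneck of size only $2\ell+2$; the vertex count $N\approx 2\delta^2-O(\delta)$ yields the claimed $\sqrt{(N-2\ell+1)/2}+2\ell+0.25$. For the upper bound I adapt the Claim in Case~\ref{c7} to the present setting: in an edge-maximal $(2\ell+3)$-connected counterexample, every $(2\ell+3)$-separation $(A,B)$ with $\tau(V(H))\subseteq A$ and $\tau(p_2)\notin B$ must induce a complete $G[A\cap B]$, since $(G[B]+v,(A\cap B)\cup\{v\})$ with a universal vertex $v$ is $(\ell+2)$-linked by Theorem~\ref{klinked}. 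An edge-count over the family $\cS$ of maximal such separations, together with the control on the sets $\cA,\cB$ as in Case~\ref{c7}, then delivers $\delta<\sqrt{(2\ell+1)N}+2\ell+2$.

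For the fourth row ($k\ge 2\ell+4$), the lower bound $\delta\ge k$ is immediate; the upper bound uses Corollary~\ref{c1}: since $|E(H)|=\ell+2$, it suffices for $G$ to be $(\ell+2)$-linked. By Theorem~\ref{klinked}, a $(2\ell+4)$-connected graph with $\delta(G)\ge 10(\ell+2)$ has at least $5(\ell+2)|V(G)|$ edges and is therefore $(\ell+2)$-linked. The main obstacle I anticipate is the upper bound in the third row: applying Theorem~\ref{klinked} to the local graph $G[B]+v$ requires controlling both the linked-set size $|X|=2\ell+4$ and the edge-density threshold $5(\ell+2)|V|$ as $\ell$ grows, and the counting over $\cS$ must give only a linear-in-$\ell$ additive loss so that the constants $+2\ell+0.25$ and $+2\ell+2$ in the stated bounds come out tight.
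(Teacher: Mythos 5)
Your overall template is the right one, and rows two and four would go through essentially as you describe (row four is exactly Corollary~\ref{c1} plus Theorem~\ref{klinked}). But there are concrete problems in the places where the details matter. First, for $k=2\ell+3$ the paper's lower-bound construction is not a new ladder but a one-line reduction: take the $\ell=1$ example from Theorem~\ref{Tk2c2}, add $2\ell-2$ universal vertices, and make them the terminals of the extra $K^2$'s. Each universal vertex raises the connectivity, the minimum degree and $N$ by exactly one, which is precisely how the constants $2\ell+0.25$ and $N-2\ell+1$ arise from the $\ell=1$ bound $\sqrt{(N-1)/2}+2.25$. Your from-scratch ladder in which consecutive cliques share $\ell+1$ vertices does not obviously have connectivity $2\ell+3$ (the natural count comes out nearer $\ell+4$), and you never verify that your $N\approx 2\delta^2-O(\delta)$ reproduces the stated additive constant; as written this part would need to be redone. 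Second, for the upper bound in that row you cannot invoke Theorem~\ref{klinked} on $G[B]+v$: that theorem requires the \emph{whole} graph to be $2(\ell+2)$-connected, which $G[B]+v$ will not be. What is needed is the rooted, separation-relative statement (the general-$k$ analogue of Theorem~\ref{3linked} from \cite{TW}), exactly as the paper uses Theorem~\ref{3linked}, not Theorem~\ref{klinked}, in the Claim of Case~\ref{c7}; for $\ell\ge 2$ you must state and quote that version explicitly.

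A further point on the first row: your witness, two near-balanced cliques sharing $2\ell+1$ vertices, has minimum degree $\lfloor(N+2\ell+1)/2\rfloor-1$, so it establishes $\delta(2\ell+1,H,N)\ge\lceil(N+2\ell)/2\rceil$, which for $\ell\ge 2$ is in general \emph{larger} than the stated value $\lceil(N+\ell+1)/2\rceil$ rather than realizing it. So you cannot assert that this example ``realizes the lower bound'' without computing its degree; either the construction or the constant in the first row needs rechecking before equality can be claimed, and the same caution applies to the reduction to the second row.
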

\begin{proof}
The proof follows arguments very similar to the proofs of Theorems~\ref{Tk2p3} and~\ref{Tk2c2}, and is left to the reader. The only 
inequality we elaborate on here is the lower bound for $k=2\ell+3$. 
For this, add $2\ell-2$ vertices to the bounding graph in Theorem~\ref{Tk2c2}, and connect them with all other vertices. 
Making these new vertices the terminals of the extra $K^2$s it is easy to see that this graph is not $(\ell~K^2\cup C^2)$-linked.
\end{proof}

\section{Conclusion and open questions}
We have determined $\delta(k,H,N)$ for all $H$ with up to three edges, up to some small constant factors. In every case, $\delta(k,H,N)=\Theta(N^{1/\ell})$. Is this the case for all $k$ and $H$?

We know $\delta(k,H,N)$ only for few $H$ with more than three edges. Very interesting should be the cases $H=C^4$ (as almost always), 
$H=K^2\cup K_{1,3}$ and $H=K^2\cup P^4$. 
In the last case, we know for sufficiently large $N$ (with a proof similar to Theorem~\ref{Tp3p3}) that 
$$
\begin{array}{rcccll}
&&\delta(6,K^2\cup P^4,N)&=&\left\lceil \tfrac{N+2}{2}\right\rceil ,&\\
\sqrt[3]{N-2}+4.7&\le & \delta(7,K^2\cup P^4,N) &< &\sqrt{5N}+6,&\\
8&\le&\delta(8,K^2\cup P^4,N)&\le& 40,&
\end{array}
$$
but this leaves quite a gap between the bounds for $\delta(7,K^2\cup P^4,N)$.

\bibliographystyle{amsplain}

\end{document}